\newtheorem{thm}{Theorem}
\newtheorem{lem}[thm]{Lemma}
\newtheorem{prop}[thm]{Proposition}
\newtheorem{cor}[thm]{Corollary}
\theoremstyle{definition}
\newtheorem*{df}{Definition}
\theoremstyle{remark}
\newtheorem*{rem}{Remark}
\newcommand{\gb}{\mathfrak{b}}
\newcommand{\gc}{\mathfrak{c}}
\newcommand{\go}{\mathfrak{o}}
\newcommand{\gp}{\mathfrak{p}}
\newcommand{\gq}{\mathfrak{q}}
\newcommand{\gr}{\mathfrak{r}}
\newcommand{\FF}{\mathbb{F}}
\newcommand{\ZZ}{\mathbb{Z}}
\newcommand{\CO}{\mathcal{O}}
\newcommand{\GE}{\mathscr{E}}
\newcommand{\GC}{\mathfrak{C}}
\newcommand{\GB}{\mathfrak{B}}
\newcommand{\SD}{\mathcal{D}}
\newcommand{\SB}{\mathscr{B}}
\newcommand{\card}[1]{|#1|}
\newcommand{\into}{\rightarrowtail}
\newcommand{\iso}{\xrightarrow{\raisebox{-2bp}{\smash{\tiny$\,\sim\,$}}}}
\newcommand{\units}[1]{#1^\times}
\newcommand{\squares}[1]{#1^{\times2}}
\newcommand{\Sing}[1]{\mathbf{E}_{#1}}
\newcommand{\SingX}{\Sing{X}}
\newcommand{\SingXp}{\Sing{X\setminus \{\gp\}}}
\newcommand{\SingY}{\Sing{Y}}
\newcommand{\Dl}[1]{\mathbf{\Delta}_{#1}} 
\newcommand{\st}{\mathrel{\mid}}
\DeclareMathOperator{\Div}{Div}
\newcommand{\DivX}{\Div X}
\newcommand{\DivY}{\Div Y}
\DeclareMathOperator{\Pic}{Pic}
\newcommand{\PicX}{\Pic X}
\newcommand{\PicZeroX}{\Pic^0 X}
\newcommand{\PicY}{\Pic Y}
\newcommand{\PPic}[1]{\sfrac{\Pic #1}{2\Pic #1}}
\newcommand{\PPicX}{\PPic{X}}
\newcommand{\PPicY}{\PPic{Y}}
\DeclareMathOperator{\rank}{rk}
\newcommand{\rk}[1][]{\ifthenelse{\equal{#1}{}}{\rank_{2}}{\rank_{#1}}}
\newcommand{\class}[2][]{\ifthenelse{\equal{#1}{}}{[#2]}{[#2]_#1}}   
\DeclareMathOperator{\dv}{div}
\newcommand{\divX}{\dv_X}
\newcommand{\divY}{\dv_Y}
\DeclareMathOperator{\ord}{ord}
\newcommand{\un}[1][]{\ifthenelse{\equal{#1}{}}{\units{K}}{\units{#1}}}
\newcommand{\sqgd}[1]{\sfrac{\units{#1}}{\squares{#1}}}
\newcommand{\term}[1]{\emph{#1}}
\newcommand{\la}{\lambda}
\newcommand{\lap}{\la_\gp}
\newcommand{\laq}{\la_\gq}
\newcommand{\even}{\equiv 0\pmod{2}}
\newcommand{\odd}{\equiv 1\pmod{2}}
\DeclareMathOperator{\dcup}{\dot{\cup}}
\newcommand{\ideal}[1]{\langle#1\rangle}
\DeclareMathOperator{\Gal}{Gal}
\newcommand{\Artin}[3]{\genfrac{(}{)}{}{}{#1/#2}{#3}}  
\newcommand{\Legendre}[2]{\genfrac{(}{)}{}{}{#1}{#2}}
\def\clap#1{\hbox to 0pt{\hss#1\hss}}
\def\mathclap{\mathpalette\mathclapinternal}
\def\mathclapinternal#1#2{\clap{$\mathsurround=0pt#1{#2}$}}
\newwrite\refs
\renewcommand\@setref[3]{%
        \ifx#1\relax
                \write\refs{'#3' \thepage\space undefined}%
                \protect \G@refundefinedtrue
                \nfss@text{\reset@font\bfseries ??}%
                \@latex@warning{Reference `#3' on page \thepage\space undefined}%
        \else
                \write\refs{'#3' \thepage\space \expandafter\@secondoftwo#1}%
                \expandafter#2#1\null
        \fi
}
\title{Graph of even points on an arithmetic curve}
\author[A. Czoga{\l}a \and P. Koprowski]{Alfred Czoga\l a \and Przemys{\l}aw Koprowski}
\address{Institute of Mathematics\\
University of Silesia\\ Bankowa 14\\ 40-007 Katowice,
Poland} \email{alfred.czogala@us.edu.pl}
\address{Institute of Mathematics\\
University of Silesia\\ Bankowa 14\\ 40-007 Katowice,
Poland} \email{przemyslaw.koprowski@us.edu.pl}
\subjclass[2010]{11G20, 11R65, 11R45, 14C22, 05C40}
\keywords{Global function fields, curves over finite fields, Picard groups, connected graphs, graph's diameter}
\begin{document}

\begin{abstract}
We show that the points of a global function field, whose classes are $2$-divisible in the Picard group, form a connected graph, with the incidence relation generalizing the well known quadratic reciprocity law. We prove that for every global function field the dimension of this graph is precisely~$2$. In addition we develop an analog of global square theorem that concerns points $2$-divisible in the Picard group.
\end{abstract}
\maketitle

\section{Introduction}
Let~$\FF_q$ be a finite field of odd characteristic. Further let $f,g\in \FF_q[t]$ be two irreducible polynomials. The well known quadratic reciprocity law says that
\[
\Legendre{f}{g}\Legendre{g}{f} = \bigl(-1\bigr)^{\frac{|f|-1}{2}\cdot \frac{|g|-1}{2}},
\]
where $|f| = q^{\deg f}$ (respectively $|g| = q^{\deg g}$) is the cardinality of the residue field $\sfrac{\FF_q[t]}{\ideal{f}}$ (resp. $\sfrac{\FF_q[t]}{\ideal{g}}$). Thus we may define a relation on the set of irreducible polynomials. We say that a polynomial~$f$ is related to~$g$, written $f\smile g$, when $\Legendre{f}{g} = 1$. This relation is symmetric unless $-1$ is a quadratic non-residue simultaneously modulo~$f$ and~$g$. In the later case $\smile$ is antisymmetric.

If we consider ideals instead of polynomials, this relation remains to be well defined, providing that the generators have even degrees. Indeed, take two ideals: $\gp = \ideal{f}$ and $\gq = \ideal{g}$, where $\deg f, \deg g\in 2\ZZ$. Write $\gp\smile \gq$ if the relation holds for the generators, i.e. if $f\smile g$. If $uf$ is another generator of~$\gp$, then $u\in \units{\FF_q}$ and so $\Legendre{u}{g} = 1$, since the degree of~$g$ is even. This shows that $\smile$ is well defined on the set of prime ideals of even degrees.
The quadratic reciprocity law ensures that this relation is symmetric. It is not, however, transitive as the following example shows. Take three polynomials $f,g,h$ with coefficients in $\FF_5$:
\[
f = t^{2} + 4 t + 1,\qquad g = t^{2} + 2 t + 3, \qquad h = t^{2} + 2.
\]
One easily checks that $f\smile g$ and $g\smile h$, yet still $f\not\smile h$. Nevertheless it can be shown that for every two unrelated polynomials $f, g$, there exists a third polynomial~$h$ related to both of them simultaneously. In particular, if we consider a graph, whose vertices are prime ideals of even degrees and $smile$ is the incidence relation, then this graph turns out to be connected and its diameter equals~$2$. The main aim of this paper it to prove that this property holds not just for polynomials and rational functions but in \emph{every} global function field (Theorem~\ref{thm:diamG=2}).

Throughout this paper $\FF_q$ is a finite field of odd characteristic and $K = \FF_q(X)$ is an algebraic function field of one variable over~$\FF_q$ (i.e. a global function field). The set~$X$ of classes of valuation of~$K$ can be treated as a smooth complete curve. For a (closed) point $\gp\in X$, by $\class{\gp}$ we denote its class in the Picard group of~$X$. We say that $\gp\in X$ is an \term{even point}, if its class in the Picard group is $2$-divisible, i.e. if $\class{\gp}\in 2\PicX$. In our introductory example, when $K = \FF_q(t)$ is the field of rational functions, the even points are precisely the ideals generated by polynomials of even degrees since $\deg : \PicX\to \ZZ$ is a group isomorphism then.

Even points exist in every global function field (Theorem~\ref{thm:density_of_EP}). This notion emerges naturally when one studies theory of quadratic forms over global function fields. It was proved in \cite{CKR18} that a point is even if and only if it is a unique wild point of some self-equivalence of~$K$. 

The set of even points admit a symmetric relation~$\smile$ (see Section~\ref{sec:graph} for a definition) generalizing the one defined above for polynomials. Like in the case of polynomials this relation is symmetric (see Proposition~\ref{prop:symmetry}) but not transitive (see \cite[Remark~3]{CKR18}). It is known (see \cite[Proposition~4.5]{CKR18} and \cite[Proposition~4.7]{CKR19}) that the relation~$\smile$ controls the formation of bigger wild sets of self-equivalences of~$K$. 

The main results of this paper are:
\begin{itemize}
\item Theorem~\ref{thm:diamG=2} saying that the graph of even points of a global function field is always connected and its diameter is precisely~$2$.
\item Theorem~\ref{thm:GST} which establishes an even point analog of the Global Square Theorem.
\item Theorem~\ref{thm:density_of_EP} showing that the set of even points have positive density, hence the even points exist in every global function field.
\item Theorem~\ref{thm:EE_X=PPicX} which says that the quotient group $\PPicX$ is naturally isomorphic to the group of square classes of~$K$ that have even valuations everywhere on~$X$.
\end{itemize}
The problem of divisibility in the Picard groups (including Picard groups of arithmetic curves) has been quite vivid in recent years and has drawn attention of numerous authors. We may cite for example \cite{Farkas2010, GJRW1996, Putman2012, Sharif2013}. The list is definitely very far from being complete but it already gives the reader some glimpse of the subject.

\section{Notation}
In what follows we use the following notation (partially already introduced above). Let $\gp\in X$ be a (closed) point, by $\ord_\gp$ we denote the associated valuation. Further $\CO_\gp = \{\la\in K\st \ord_\gp\la\geq 0\}$ is the valuation ring, $K_\gp$ is the completion of~$K$ at~$\gp$ and $K(\gp)$ is the residue field. If $Y\subseteq X$ is an open subset of~$X$ and $\SD\in \DivY$ is a divisor, then $\class[Y]{\SD}$ denotes its class in the Picard group $\PicY$. In case $Y = X$ we drop the subscript and simply write $\class{\SD}$.

It is well known that the exact sequence
\[
0\to \PicZeroX\to \PicX\xrightarrow{\deg} \ZZ\to 0
\]
splits (because~$\ZZ$ is a projective $\ZZ$-module). Equivalently we may write
\begin{equation}\label{eq:PicX_decomposition}
\PicX\cong \PicZeroX\oplus \ZZ,
\end{equation}
where the projection onto the second coordinate is the degree homomorphism. Therefore the necessary condition for a point $\gp\in X$ to be even is $\deg\gp\in 2\ZZ$. This condition is not sufficient, though, unless $\card{\PicZeroX}$ is odd. A convenient condition of evenness of~$\gp$ makes use of a certain subgroup of the square class group of~$K$. Let $Y\subseteq X$ be an open nonempty subset of~$X$. The group morphism $\divY: \un\to \DivY$, that assigns to a nonzero element of~$K$ its principal divisor, induces a morphism of the quotient groups $\sqgd{K}\to \sfrac{\DivY}{2\DivY}$. Harmlessly abusing the notation, we denote the latter morphism by $\divY$, too. Define the subgroup~$\SingY$ of~$\sqgd{K}$ to be the kernel of this map:
\[
\SingY := \ker\bigl( \divY: \sqgd{K}\to \sfrac{\DivY}{2\DivY} \bigr).
\]
It is clear that~$\SingY$ consists of these square classes that have even valuations everywhere on~$Y$:
\[
\SingY = \bigl\{ \la\in\sqgd{K}\st \ord_\gp\la\even \quad\text{for all }\gp\in Y\bigr\}.
\]
If $Y$ is a proper subset of~$X$, we define one more subgroup. Let~$\Dl{Y}$ be the subset of~$\SingX$ consisting of classes of functions that are local squares outside~$Y$, namely
\[
\Dl{Y} := \SingX \cap \bigcap_{\gp\notin Y}\squares{K_\gp} = \SingY \cap \bigcap_{\gp\notin Y}\squares{K_\gp}.
\]
The groups $\Dl{Y}, \SingY, \sqgd{K}$ and $\PPicX$ are elementary $2$-groups. We will often treat them as $\FF_2$-vector spaces.

\section{Natural isomorphism}
We begin with a function field analog of a result, which for number fields is known as Hecke Satz 169. We suspect that it may be known to experts but we are not aware of any convenient reference.

\begin{thm}[``Hecke Satz 169'' for function fields]\label{thm:Hecke_Satz_169}
Let $K = \FF_q(X)$ be a global function field. Further let $\la_1, \dotsc, \la_n\in \sqgd{K}$ be square classes linearly independent over~$\FF_2$ and $e_1, \dotsc, e_n\in \{0,1\}$ be some arbitrary exponents. Then the density of a set
\[
A := \Bigl\{ \gp\in X\st \Legendre{\la_i}{\gp} = (-1)^{e_i}\quad\text{for every }1\leq i\leq n\Bigr\}
\]
equals $\delta A = \frac{1}{2^n}$.
\end{thm}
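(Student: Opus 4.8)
The plan is to translate the statement into an equidistribution problem and then invoke the Chebotarev density theorem for global function fields. Since $\la_1, \dotsc, \la_n$ are linearly independent in the $\FF_2$-vector space $\sqgd{K}$, the compositum $L := K\bigl(\sqrt{\la_1}, \dotsc, \sqrt{\la_n}\bigr)$ is a multiquadratic extension of~$K$ of degree exactly $2^n$, and restriction to the generators $\sqrt{\la_i}$ identifies its Galois group with $\FF_2^n$: to $\sigma\in\Gal(L/K)$ we attach the vector $\bigl(\epsilon_1(\sigma), \dotsc, \epsilon_n(\sigma)\bigr)\in\FF_2^n$ determined by $\sigma(\sqrt{\la_i}) = (-1)^{\epsilon_i(\sigma)}\sqrt{\la_i}$. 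The independence of the~$\la_i$ is precisely what guarantees that every sign pattern is realized, i.e.\ that this map is an isomorphism onto $\FF_2^n$.

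Next I would relate the local symbols to Frobenius elements. All but finitely many points $\gp\in X$ are unramified in~$L$ and satisfy $\ord_\gp\la_i = 0$ for every~$i$; for such~$\gp$ the Frobenius $\mathrm{Frob}_\gp\in\Gal(L/K)$ is a well-defined element (the group being abelian), and by the very definition of the quadratic residue symbol one has $\mathrm{Frob}_\gp(\sqrt{\la_i}) = \Legendre{\la_i}{\gp}\,\sqrt{\la_i}$ for each~$i$. Hence the defining conditions of~$A$ read $\epsilon_i(\mathrm{Frob}_\gp) = e_i$ for all~$i$, which is to say $\mathrm{Frob}_\gp = \sigma_0$ for the single element $\sigma_0\in\Gal(L/K)$ corresponding to the vector $(e_1, \dotsc, e_n)$. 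Thus $A$ agrees, up to a finite (hence density-zero) set of excluded points, with the set of points whose Frobenius in $L/K$ equals the fixed element~$\sigma_0$.

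Finally I would apply the Chebotarev density theorem to the abelian extension $L/K$: the set of points of~$X$ with Frobenius equal to~$\sigma_0$ has density $1/\card{\Gal(L/K)} = 1/2^n$. Removing the finite exceptional set does not change the density, so $\delta A = 1/2^n$, as claimed.

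I expect the only genuinely delicate point to be the correct invocation of Chebotarev in the function-field setting, where $L/K$ need not be geometric: if some product $\prod_{i\in S}\la_i$ is (the square class of) a constant, then~$L$ contains a constant-field extension of~$K$, and the restriction of~$\sigma_0$ to that constant field imposes a congruence condition on $\deg\gp$. One must therefore use the form of the density theorem valid for arbitrary (possibly constant-involving) Galois extensions, which still assigns density $1/\card{G}$ to each element of an abelian group~$G$ irrespective of its action on the field of constants. The toy case of the degree-two constant extension $\FF_{q^2}(t)/\FF_q(t)$, where the inert points are exactly those of odd degree and indeed have density $\tfrac12$, already illustrates why the count comes out uniformly and reassures us that no prescribed sign pattern is accidentally vacuous.
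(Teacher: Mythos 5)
Your proof is correct and takes essentially the same route as the paper's: both form the multiquadratic compositum $L = K\bigl(\sqrt{\la_1},\dotsc,\sqrt{\la_n}\bigr)$, use Kummer theory and the linear independence of the $\la_i$ to identify $\Gal(L/K)$ with $(\ZZ/2\ZZ)^n$, translate the sign conditions on the Legendre symbols into the condition that the Frobenius/Artin symbol equals one fixed element of this group, and conclude by the Chebotarev density theorem. If anything, you are slightly more careful than the paper, which passes over the finitely many points that are ramified or have $\ord_\gp\la_i\neq 0$ and the possibility that $L/K$ involves a constant-field extension, both of which you address explicitly.
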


\begin{proof}
For every $i\leq n$ let $L_i := K(\sqrt{\la_i})$ be the quadratic extension of~$K$ determined by~$\la_i$. Further let $G_i := \Gal(L_i/K)$ be the associated Galois group and $\sigma_i\in G_i$ be the unique non-trivial $K$-automorphism of~$L_i$. Take the composite field $L := L_1\dotsm L_n = K\bigl(\sqrt{\la_1}, \dotsc, \sqrt{\la_n}\bigr)$. It is a multi-quadratic extension of~$K$ and it follows from Kummer theory that $L/K$ is abelian with the Galois group
\begin{equation}\label{eq:multiquad_Galois}
G := \Gal(L/K) \cong G_1\times \dotsb \times G_n\cong (\sfrac{\ZZ}{2\ZZ})^n.
\end{equation}
By Chebotarev density theorem (see e.g. \cite[Theorem~9.13A]{Rosen02} or \cite[Theorem~12, Chapter~XII]{Weil95}) for every $\tau\in G$ we have
\begin{equation}\label{eq:density}
\delta\Bigl\{ \gp\in X\st \Artin{L}{K}{\gp} = \tau\Bigr\} = \frac{1}{\card{G}} = \frac{1}{2^n}.
\end{equation}
Here $\Artin{L}{K}{\gp}$ is the Artin symbol of~$\gp$ in the field extension $L/K$. Take an automorphism $\sigma := \bigl( \sigma_1^{e_1}, \dotsc, \sigma_n^{e_n}\bigr)\in G$. Using the isomorphism Eq.~\eqref{eq:multiquad_Galois} we treat the Artin symbol $\Artin{L}{K}{\gp}$ as a tuple $( \Artin{L_1}{K}{\gp}, \dotsc, \Artin{L_n}{K}{\gp} )$. Fix an index $i\leq n$. We claim that $\Artin{L_i}{K}{\gp} = \sigma_i^{e_i}$ if and only if $\Legendre{\la_i}{\gp} = (-1)^{e_i}$. Indeed, the group $G_i$ consists of just two elements: the identity and $\sigma_i$. The Artin symbol $\Artin{L_i}{K}{\gp}$ vanishes if and only if $\gp$ splits in $L_i$ if and only if the polynomial $t^2- \la_i(\gp)$ factors over the residue field $K(\gp)$. This last condition is equivalent to $\Legendre{\la_i}{\gp} = 1$, which proves the claim. It follows now from Eq.~\eqref{eq:density} that
\[
\frac{1}{2^n} 
= \delta\Bigl\{ \gp\in X\st \Artin{L}{K}{\gp} = \sigma \Bigr\} 
= \delta\Bigl\{ \gp\in X\st \Legendre{\la_i}{\gp} = (-1)^{e_i}\quad\text{for }i\leq n\Bigr\}. \qedhere
\]
\end{proof}

\begin{df}
Let $\gp_1, \dotsc, \gp_k\in X$ and $\la_1, \dotsc, \la_k\in \SingX$. We say that the points $\gp_1, \dotsc, \gp_k$ are \term{compatible} with square classes $\la_1, \dotsc, \la_k$ if 
\[
\Legendre{\la_i}{\gp_j} =
\begin{cases}
1 &\text{if }i\neq j\\
-1 &\text{if }i = j
\end{cases}
\]
for all $1\leq i, j\leq n$.
\end{df}

\begin{lem}\label{lem:points_to_classes}
Let $\gp_1, \dotsc, \gp_k\in X$ be finitely many points. If the classes $\class{\gp_1}, \dotsc, \class{\gp_k}$ are linearly independent \textup(over~$\FF_2$\textup) in $\PPicX$, then there exist linearly independent elements $\la_1, \dotsc, \la_k\in \SingX$ compatible with $\gp_1, \dotsc, \gp_k$.
\end{lem}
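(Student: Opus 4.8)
The plan is to translate the statement into the non-degeneracy of a bilinear pairing over~$\FF_2$ and then feed Theorem~\ref{thm:Hecke_Satz_169} into it. All the groups involved are elementary $2$-groups, so I treat them as $\FF_2$-vector spaces, and I would first introduce the pairing
\[
\Phi\colon \SingX\times\PPicX\to\FF_2,\qquad
\Phi\bigl(\la,\class{\gp}\bigr)=
\begin{cases}0,&\Legendre{\la}{\gp}=1,\\ 1,&\Legendre{\la}{\gp}=-1,\end{cases}
\]
extended bi-additively (with $\Legendre{\la}{-}$ taken multiplicatively on divisors). The point that must be settled first is that $\Phi$ is well defined in its second slot, i.e.\ that $\Legendre{\la}{-}$ really descends from $\DivX$ to $\PPicX$ for $\la\in\SingX$. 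Because such a $\la$ has even order at every point, the extension $K(\sqrt\la)/K$ is everywhere unramified and $\Legendre{\la}{\gp}$ equals the Artin symbol $\Artin{K(\sqrt\la)}{K}{\gp}$ (the identification already used inside the proof of Theorem~\ref{thm:Hecke_Satz_169}); the reciprocity (product) formula for this symbol then kills principal divisors, while $2\DivX$ is killed automatically. I expect this descent---the reciprocity input---to be the one genuinely non-formal ingredient, hence the main obstacle; everything downstream is linear algebra plus a single appeal to Chebotarev.

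Granting the pairing, compatibility of $\la_i$ with $\gp_1,\dots,\gp_k$ is exactly the requirement $\Phi(\la_i,\class{\gp_j})=\delta_{ij}$. Let $V\subseteq\PPicX$ be the span of $\class{\gp_1},\dots,\class{\gp_k}$; by hypothesis $\dim_{\FF_2}V=k$ and the functionals $\class{\gp_j}\mapsto\delta_{ij}$ form precisely the dual basis of $V^{*}$. Thus the existence of the required $\la_i$ is equivalent to the surjectivity of the $\FF_2$-linear map
\[
\SingX\longrightarrow V^{*},\qquad \la\longmapsto \Phi(\la,-)\big|_{V},
\]
and their linear independence is then automatic: any relation $\prod_i\la_i^{a_i}\in\squares{K}$ would pair against $\class{\gp_j}$ to give $\sum_i a_i\delta_{ij}=a_j=0$ for every~$j$.

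It remains to prove the surjectivity, which I would obtain by showing that $\Phi$ is a perfect pairing. Non-degeneracy in the left variable is where Theorem~\ref{thm:Hecke_Satz_169} enters: if $\la\in\SingX$ is a non-trivial square class, then the theorem with $n=1$ gives density $\frac12$ for the set of $\gp$ with $\Legendre{\la}{\gp}=-1$, so this set is non-empty and $\Phi(\la,-)\neq0$; hence $\SingX\hookrightarrow(\PPicX)^{*}$. To see that this injection is onto I would match dimensions. On one side, sending $\la\in\SingX$ to the class of the divisor $D$ with $2D=\divX\la$ yields a short exact sequence
\[
0\to\sqgd{\FF_q}\to\SingX\to(\PicX)[2]\to0,
\]
so $\dim_{\FF_2}\SingX=1+\dim_{\FF_2}(\PicX)[2]$. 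On the other, the splitting $\PicX\cong\PicZeroX\oplus\ZZ$ together with the finiteness of $\PicZeroX$ gives $\dim_{\FF_2}\PPicX=\dim_{\FF_2}(\PicX)[2]+1$. Hence $\dim\SingX=\dim\PPicX$, the injection $\SingX\hookrightarrow(\PPicX)^{*}$ is an isomorphism, $\Phi$ is perfect, and the desired map $\SingX\to V^{*}$ factors as this isomorphism followed by the surjective restriction $(\PPicX)^{*}\twoheadrightarrow V^{*}$; it is therefore surjective, completing the proof.
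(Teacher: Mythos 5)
Your proof is correct, and it takes a genuinely different route from the paper's. The paper proves Lemma~\ref{lem:points_to_classes} by a case distinction on degrees: if all $\deg\gp_i$ are even, it uses F.K.~Schmidt's theorem to produce a point~$\go$ of odd degree, passes to the affine curve $Y = X\setminus\{\go\}$, checks (by a degree computation on principal divisors) that linear independence survives in $\PPicY$, and then invokes \cite[Lemma~4.1]{CKR18}; if some $\gp_k$ has odd degree, it removes that point instead and builds the last class by hand as $\zeta$ times the product of the $\la_i$ attached to odd-degree points, $\zeta$ being the constant non-square. You instead prove outright that the Legendre pairing $\SingX\times\PPicX\to\FF_2$ is perfect: well-definedness in the second slot via reciprocity (legitimate, since $\la\in\SingX$ makes $K(\sqrt{\la})/K$ everywhere unramified; this is the same reciprocity computation the paper performs inside Lemma~\ref{lem:classes_to_points} and Proposition~\ref{prop:Legendre_from_coords}), left non-degeneracy from Theorem~\ref{thm:Hecke_Satz_169} with $n=1$, and equality of dimensions from the exact sequence $0\to\sqgd{\FF_q}\to\SingX\to(\PicX)[2]\to 0$ combined with $\PicX\cong\PicZeroX\oplus\ZZ$ and the finiteness of $\PicZeroX$ (your kernel computation tacitly uses that $\FF_q$ is the exact constant field, which is the standing assumption). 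The trade-offs are real: the paper's argument is more elementary, leans on the earlier affine-curve result of \cite{CKR18}, and keeps this lemma logically prior to Theorem~\ref{thm:EE_X=PPicX}, which the paper then deduces from it; your argument is self-contained modulo Chebotarev and reciprocity, but it effectively proves Theorem~\ref{thm:EE_X=PPicX} first --- and in the stronger, canonical form $\SingX\cong(\PPicX)^{*}$ --- after which Lemma~\ref{lem:points_to_classes}, Lemma~\ref{lem:classes_to_points}, and Proposition~\ref{prop:Legendre_from_coords} all fall out as corollaries of perfection. Since you use only Theorem~\ref{thm:Hecke_Satz_169} and reciprocity, there is no circularity; your route simply inverts the logical order of the section.
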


\begin{proof}
We must consider two cases. First assume that all the points $\gp_1, \dotsc, \gp_k$ have even degrees. A classical theorem by F.K.~Schmidt (see e.g. \cite[Corollary~V.1.11]{Sti93}) implies  that there is a point $\go\in X$ of an odd degree. Take an affine curve $Y := X\setminus \{\go\}$. By the assumption, $\class{\gp_1}, \dotsc, \class{\gp_k}$ are linearly independent in $\PPicX$. We claim that also $\class[Y]{\gp_1}, \dotsc, \class[Y]{\gp_k}$ are linearly independent in $\PPicY$.

In order to prove the claim suppose that
\[
\varepsilon_1\class[Y]{\gp_1} + \dotsb + \varepsilon_k\class[Y]{\gp_k} \equiv 0\pmod{2\PicY},
\]
for some $\varepsilon_1, \dotsc, \varepsilon_k\in \FF_2$. Thus there exists a divisor $\SD\in \DivY$ and an element $\mu\in \un$ such that the equality
\[
\divY\mu = \sum_{i\leq k}\varepsilon_i\gp_i + 2\SD
\]
holds in the group $\DivY$. Passing to the divisor group of the complete curve we write
\[
\divX\mu = \sum_{i\leq k}\varepsilon_i\gp_i + 2\SD + \ord_\go(\mu)\cdot \go.
\]
Compute the degrees of both sides to get
\[
0 = \sum_{i\leq k}\varepsilon_i\cdot \deg\gp_i + 2\deg \SD + \ord_\go(\mu) \cdot \deg \go.
\]
The degrees $\deg\gp_1, \dotsc, \deg\gp_k$ are all even, while $\deg\go$ is odd. It follows that $\ord_\go\mu = 2k$ for some $k\in\ZZ$. Consequently we obtain 
\[
\divX\mu = \sum_{i=1}^k\varepsilon_i\gp_i + 2(\SD + k\go). 
\]
Hence $\varepsilon_1\class{\gp_1} + \dotsb + \varepsilon_k\class{\gp_k}$ is the neutral element of the vector space $\PPicX$, therefore $\varepsilon_1 = \dotsb = \varepsilon_k = 0$, which proves the claim.

Now, \cite[Lemma~4.1]{CKR18} asserts that there exist linearly independent square classes $\la_1, \dotsc, \la_k\in \Dl{Y}$ compatible with $\gp_1, \dotsc, \gp_k$. The elements $\la_1, \dotsc, \la_k$ remain linearly independent in~$\SingX$ since $\Dl{Y}$ is a subspace of $\SingX$. This ends the proof in the first case.

We now consider the case when at least one of the points $\gp_1, \dotsc, \gp_k$ has an odd degree. Without loss of generality we may assume that it is the point~$\gp_k$. Set $Y := X\setminus\{\gp_k\}$. The classes $\class[Y]{\gp_1}, \dotsc, \class[Y]{\gp_{k-1}}$ are linearly independent in $\PPicY$ and so using \cite[Lemma~4.1]{CKR18} again we obtain square classes $\la_1, \dotsc, \la_{k-1}\in \Dl{Y}$ compatible with $\gp_1, \dotsc, \gp_{k-1}$. By the definition of $\Dl{Y}$ we see that $\Legendre{\la_i}{\gp_k}=1$ for every $i < k$. It remains to show the existence of~$\la_k$. Let~$\zeta$ be the unique non-trivial element of the square-class group $\sqgd{\FF_q}$. Observe that for every point $\gq\in X$ we have 
\[
\Legendre{\zeta}{\gq} =
\begin{cases}
1&\text{if }\deg\gq\even\\
-1&\text{if }\deg\gq\odd.
\end{cases}
\]
Define~$\la_k$ to be a product of~$\zeta$ multiplied by these $\la_i$'s that correspond to points of odd degrees:
\[
\la_k := \zeta\cdot \prod_{\mathclap{\substack{1\leq i< k\\ \deg\gp_i\notin 2\ZZ}}} \la_i.
\]
It is now clear that $\la_1, \dotsc, \la_{k-1}, \la_k$ are compatible with $\gp_1, \dotsc, \gp_k$.
\end{proof}

We now prove a converse of Lemma~\ref{lem:points_to_classes}.

\begin{lem}\label{lem:classes_to_points}
Let $\gp_1, \dotsc, \gp_k\in X$ be finitely many points. If there exists compatible square classes $\la_1, \dotsc, \la_k\in \SingX$, then the classes of $\gp_1, \dotsc, \gp_k$ are linearly independent in $\PPicX$.
\end{lem}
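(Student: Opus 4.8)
My plan is to deduce the result from Hilbert reciprocity, using the compatible classes $\la_1,\dotsc,\la_k$ as ``test functions'' against any hypothetical relation. Concretely, I would suppose towards a contradiction that $\class{\gp_1},\dotsc,\class{\gp_k}$ are linearly dependent in $\PPicX$, say
\[
\varepsilon_1\class{\gp_1} + \dotsb + \varepsilon_k\class{\gp_k} \equiv 0 \pmod{2\PicX}
\]
for some $\varepsilon_1,\dotsc,\varepsilon_k\in\FF_2$ not all zero. Unwinding the definition of the Picard group, this means there are a function $\mu\in\un$ and a divisor $\SD\in\DivX$ with
\[
\divX\mu = \sum_{i\leq k}\varepsilon_i\gp_i + 2\SD .
\]
The goal is then to show that \emph{every} $\varepsilon_j$ must vanish, contradicting the choice of the relation and thereby establishing linear independence.

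Fix an index $j\leq k$ and consider the local Hilbert symbols $(\la_j,\mu)_\gp$ as $\gp$ ranges over all points of~$X$. Since the characteristic is odd, every residue field has odd characteristic, so each symbol is computed by the tame formula; and because $\la_j\in\SingX$ has even valuation at every point, writing $\la_j$ as a unit times an even power of a uniformizer shows that only the residue class of its unit part survives, raised to the power $\ord_\gp\mu$. This yields the clean local identity
\[
(\la_j,\mu)_\gp = \Legendre{\la_j}{\gp}^{\,\ord_\gp\mu}\qquad\text{for every }\gp\in X .
\]
The key global input is Hilbert reciprocity, i.e.\ the product formula
\[
\prod_{\gp\in X}(\la_j,\mu)_\gp = 1 ,
\]
a finite product because $\ord_\gp\mu=0$ for all but finitely many~$\gp$.

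It then remains to evaluate the product combinatorially. From the expression for $\divX\mu$ one reads off that $\ord_\gp\mu\even$ whenever $\gp\notin\{\gp_1,\dotsc,\gp_k\}$, whereas $\ord_{\gp_i}\mu\equiv\varepsilon_i\pmod 2$ for each~$i$. Since each Legendre symbol is $\pm1$, only the factors with odd $\ord_\gp\mu$ can differ from~$1$, so the product collapses to $\prod_{i\leq k}\Legendre{\la_j}{\gp_i}^{\varepsilon_i}$. By compatibility $\Legendre{\la_j}{\gp_i}=1$ for $i\neq j$ and $\Legendre{\la_j}{\gp_j}=-1$, whence the whole product equals $(-1)^{\varepsilon_j}$. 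Reciprocity forces $(-1)^{\varepsilon_j}=1$, that is $\varepsilon_j=0$; as $j$ was arbitrary, all coefficients vanish, which is the desired contradiction.

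I expect the only delicate point to be the passage from the global product formula to the local identity: one must check carefully that the Hilbert symbol $(\la_j,\mu)_\gp$ is genuinely given by the tame formula at every point and reduces exactly to $\Legendre{\la_j}{\gp}^{\ord_\gp\mu}$ precisely because $\la_j$ has even valuation there (so that its uniformizer part contributes a square). Once this local computation and Hilbert reciprocity are available in the function-field setting, the remaining bookkeeping with the divisor of~$\mu$ and the compatibility relations is immediate.
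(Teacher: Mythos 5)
Your proof is correct and follows essentially the same route as the paper: both start from a hypothetical relation $\divX\mu = \sum_i\varepsilon_i\gp_i + 2\SD$, apply Hilbert reciprocity to the symbols $(\la_j,\mu)_\gq$, and use the compatibility conditions plus the evenness of $\ord_\gq\la_j$ to collapse the product to $(-1)^{\varepsilon_j}$. The only cosmetic differences are that you make the tame-symbol identity $(\la_j,\mu)_\gp = \Legendre{\la_j}{\gp}^{\ord_\gp\mu}$ explicit and conclude $\varepsilon_j=0$ for every index $j$, whereas the paper fixes the minimal index with $\varepsilon_j\neq 0$ and derives the contradiction there.
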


\begin{proof}
Suppose that $\varepsilon_1\class{\gp_1} + \dotsb + \varepsilon_k\class{\gp_k}\in 2\PicX$ for some $\varepsilon_1, \dotsc, \varepsilon_k\in \FF_2$ not all zero. Thus there exists a divisor $\SD\in \DivX$ and an element $\mu\in \un$ such that
\[
\divX\mu = \varepsilon_1\gp_1 + \dotsb + \varepsilon_k\gp_k + 2\SD.
\]
Set $j := \min \{i\st \varepsilon_i\neq 0\}$. In particular we have $\ord_{\gp_j}\mu\odd$. Observe that the valuation of~$\mu$ at every $\gq\in X\setminus \{\gp_1, \dotsc, \gp_k\}$ is necessarily even. On the other hand the valuation of~$\la_j$ is even absolutely everywhere as $\la_j\in \SingX$. Combining these two facts we see that the Hilbert symbol $(\mu, \la_j)_\gq$ vanishes everywhere except possibly at $\gp_1, \dotsc, \gp_k$.  Moreover, the compatibility between $\gp_1, \dotsc, \gp_k$ and $\la_1, \dotsc, \la_k$ implies that $\la_j$ is a local square at~$\gp_i$  for every $i\neq j$ and so $(\mu, \la_j)_{\gp_i} = 1$. Hilbert reciprocity law implies that:
\[
1 = \prod_{\gq\in X} (\mu, \la_j)_\gq = (\mu, \la_j)_{\gp_j}.
\]
But this is impossible since $\ord_{\gp_j}\mu\odd$ and $\la_j\notin \squares{K_{\gp_j}}$. 
\end{proof}

We are now ready to present the first of the main results of this paper.

\begin{thm}\label{thm:EE_X=PPicX}
$\SingX\cong \PPicX$.
\end{thm}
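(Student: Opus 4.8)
The plan is to realize the stated isomorphism as a duality induced by the Legendre symbol and to prove it by showing that the two elementary $2$-groups have the same $\FF_2$-dimension. Both spaces are finite over $\FF_2$: for $\PPicX$ this follows from the decomposition~\eqref{eq:PicX_decomposition} together with the finiteness of $\PicZeroX$, while the finiteness of $\SingX$ will drop out of the argument below. The object linking them is the pairing
\[
\Phi\colon \SingX\times \PPicX\to \FF_2,\qquad \Phi\bigl(\la, \class{\gp}\bigr) = \begin{cases} 0 &\text{if }\Legendre{\la}{\gp}=1,\\ 1&\text{if }\Legendre{\la}{\gp}=-1,\end{cases}
\]
extended biadditively. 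Because $\la\in\SingX$ has even valuation at every point, the symbol $\Legendre{\la}{\gp}$ is defined for all $\gp\in X$, so $\Phi$ makes sense on points; it descends through $2\DivX$ trivially, and through principal divisors by Hilbert reciprocity. I would then show that $\Phi$ is a perfect pairing, which gives $\SingX\cong(\PPicX)^\ast\cong\PPicX$.

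For the inequality $\dim_{\FF_2}\SingX\geq \dim_{\FF_2}\PPicX$ I would argue as follows. The group $\PicX$ is generated by the classes of points, hence so is its quotient $\PPicX$; therefore I can choose points $\gp_1,\dotsc,\gp_n\in X$ whose classes form an $\FF_2$-basis of $\PPicX$, where $n:=\dim_{\FF_2}\PPicX$. Being a basis, $\class{\gp_1},\dotsc,\class{\gp_n}$ are in particular linearly independent, so Lemma~\ref{lem:points_to_classes} yields linearly independent $\la_1,\dotsc,\la_n\in\SingX$ compatible with them, and $\dim_{\FF_2}\SingX\geq n$. In pairing language, compatibility says precisely that $\Phi(\la_i,\class{\gp_j})=\delta_{ij}$, so the $\la_i$ map to the dual basis of $(\PPicX)^\ast$ and the induced map $\SingX\to(\PPicX)^\ast$ is onto.

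For the reverse inequality I would start from any finite linearly independent family $\la_1,\dotsc,\la_m\in\SingX$. Since $\SingX\subseteq\sqgd{K}$, these classes are also independent over $\FF_2$ inside $\sqgd{K}$, so Theorem~\ref{thm:Hecke_Satz_169} applies: for each $j$ the set of points $\gp$ satisfying $\Legendre{\la_i}{\gp}=(-1)^{\delta_{ij}}$ for all $i\leq m$ has density $2^{-m}>0$ and is in particular nonempty. Choosing one such $\gp_j$ for every $j$ produces points $\gp_1,\dotsc,\gp_m$ compatible with $\la_1,\dotsc,\la_m$, whence Lemma~\ref{lem:classes_to_points} forces $\class{\gp_1},\dotsc,\class{\gp_m}$ to be linearly independent in $\PPicX$. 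Therefore $m\leq n$; as $m$ was arbitrary this both bounds $\dim_{\FF_2}\SingX\leq n$ and shows $\SingX$ is finite-dimensional. Combining the two inequalities gives $\dim_{\FF_2}\SingX=\dim_{\FF_2}\PPicX$, and since equal-dimensional $\FF_2$-vector spaces are isomorphic we conclude $\SingX\cong\PPicX$; equivalently the pairing $\Phi$ is perfect.

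The main obstacle is not any single calculation but the orchestration of Theorem~\ref{thm:Hecke_Satz_169} with the two lemmas: the lower bound works only once one observes that $\PPicX$ admits a basis consisting of point classes, and the upper bound requires manufacturing compatible points out of independent square classes, which is exactly what the density statement of Theorem~\ref{thm:Hecke_Satz_169} guarantees. If one insists on the word ``natural'' and wants the explicit map rather than a bare dimension count, the only genuinely computational point is the verification that $\Phi$ descends modulo principal divisors, i.e. that $\prod_{\gp}\Legendre{\la}{\gp}^{\ord_\gp\mu}=1$ for every $\mu\in\un$ and $\la\in\SingX$; this follows from the tame-symbol expression $(\mu,\la)_\gp=\Legendre{\la}{\gp}^{\ord_\gp\mu}$, valid because $\ord_\gp\la$ is even, together with Hilbert reciprocity $\prod_\gp(\mu,\la)_\gp=1$ — the same reciprocity already exploited in the proof of Lemma~\ref{lem:classes_to_points}.
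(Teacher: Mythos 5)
Your proof is correct, and it is built from the same three ingredients as the paper's own argument --- Theorem~\ref{thm:Hecke_Satz_169} to produce points compatible with a given independent family of square classes, Lemma~\ref{lem:classes_to_points} to transport independence from $\SingX$ to $\PPicX$, and Lemma~\ref{lem:points_to_classes} to transport it back --- but you orchestrate them differently, and the difference buys you something. The paper fixes a basis of $\SingX$ at the outset, citing \cite[Lemma~2.4]{CKR18} for the finite-dimensionality of $\SingX$, then uses Theorem~\ref{thm:Hecke_Satz_169} together with Lemma~\ref{lem:classes_to_points} to get an injection $\SingX\into\PPicX$, and finally proves surjectivity by contradiction via Lemma~\ref{lem:points_to_classes}. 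You instead prove two dimension inequalities: since points generate $\PicX$, a basis of $\PPicX$ can be chosen among point classes, and Lemma~\ref{lem:points_to_classes} gives $\dim_{\FF_2}\SingX\geq\dim_{\FF_2}\PPicX$; conversely, any finite independent family in $\SingX$ is capped in size by $\dim_{\FF_2}\PPicX$ via Theorem~\ref{thm:Hecke_Satz_169} and Lemma~\ref{lem:classes_to_points} (note your chosen points are automatically distinct, since the density sets for different indices are disjoint). Because finiteness of $\PPicX$ is immediate from Eq.~\eqref{eq:PicX_decomposition} and the finiteness of $\PicZeroX$, your version never needs the external reference for finite-dimensionality of $\SingX$ --- it falls out of the second inequality --- so the proof is slightly more self-contained. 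Your packaging of the result as a perfect Legendre-symbol pairing $\SingX\times\PPicX\to\FF_2$ is also sound (well-definedness modulo principal divisors is exactly the tame-symbol-plus-Hilbert-reciprocity computation already used in Lemma~\ref{lem:classes_to_points}), and it isolates what is actually canonical here: the induced isomorphism $\SingX\iso(\PPicX)^{\ast}$ is basis-free, whereas any identification of $\PPicX$ with its dual is not --- which is consistent with the paper's remark after Corollary~\ref{cor:uniq_compatible_basis} that the compatible-basis isomorphisms depend genuinely on the chosen basis.
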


\begin{proof}
It is well known that the group $\PicZeroX$ is finite. Therefore the group~$\SingX$, viewed as a $\FF_2$-vector space, is finitely dimensional by \cite[Lemma~2.4]{CKR18}. Pick a basis $\la_1, \dotsc, \la_k\in \SingX$ of this vector space. Theorem~\ref{thm:Hecke_Satz_169} implies that there are compatible points $\gp_1, \dotsc, \gp_k\in X$ so we have
\[
\Legendre{\la_i}{\gp_j} = 
\begin{cases}
1&\text{if }i\neq j\\
-1&\text{i }i = j
\end{cases}
\]
for all $1\leq i,j\leq k$. Now, Lemma~\ref{lem:classes_to_points} asserts that the classes $\class{\gp_1}, \dotsc, \class{\gp_k}$ are linearly independent in $\PPicX$. Take a linear map that sends $\la_i$ to~$\gp_i$ and extend it by linearity. It is an injection $\SingX\into \PPicX$ of $\FF_2$-vector spaces. We claim that it is actually an isomorphism. Indeed, suppose a contrario that this function is not surjective. Thus, there is a point $\gp_{k+1}\in X$ such that $\class{\gp_1}, \dotsc, \class{\gp_k}, \class{\gp_{k+1}}$ remain linearly independent.  Lemma~\ref{lem:points_to_classes} says that there are $k+1$ linearly independent elements in~$\SingX$, but this contradicts the fact that we have $\dim_{\FF_2}\SingX = k$.
\end{proof}

\begin{prop}\label{prop:Legendre_from_coords}
Let $\GB = \{\gb_1, \dotsc, \gb_k\}$ be a subset of~$X$ such that $\class{\GB} := \bigl\{ \class{\gb_1}+2\PicX, \dotsc, \class{\gb_k}+2\PicX\bigr\}$ is a basis of $\PPicX$. Further let $\SB = \{\beta_1, \dotsc, \beta_k\}$ be a compatible basis of~$\SingX$. For every $\gp\in X$ and $\la \in \SingX$ if $\varepsilon_1, \dotsc, \varepsilon_k$ are the coordinates of $\class{\gp} + 2\PicX$ with respect to~$\class{\GB}$ and $\epsilon_1, \dotsc, \epsilon_k$ are the coordinates of~$\la$ with respect to $\SB$, then
\[
\Legendre{\la}{\gp} = (-1)^{\sum \varepsilon_i\epsilon_i}.
\]
\end{prop}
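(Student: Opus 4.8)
The plan is to recognize $\Legendre{\cdot}{\cdot}$ as an $\FF_2$-bilinear pairing between $\SingX$ and $\PPicX$ whose Gram matrix, with respect to the bases $\SB$ and $\class{\GB}$, is the identity by the compatibility hypothesis; the asserted formula is then simply the coordinate expansion of this pairing.

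First I would fix $\gp$ and observe that $\la\mapsto\Legendre{\la}{\gp}$ is a homomorphism $\SingX\to\{\pm1\}$: for $\la\in\SingX$ the symbol $\Legendre{\la}{\gp}$ is the quadratic character of the residue of the unit part of~$\la$ (well defined precisely because $\ord_\gp\la\even$), and this is multiplicative in~$\la$. Dually I would extend the symbol to divisors by setting $\Legendre{\la}{\SD}:=\prod_{\gq}\Legendre{\la}{\gq}^{n_\gq}$ for $\SD=\sum_\gq n_\gq\gq\in\DivX$, so that for fixed~$\la$ the assignment $\SD\mapsto\Legendre{\la}{\SD}$ is a homomorphism $\DivX\to\{\pm1\}$.

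The heart of the argument, and the step I expect to be the main obstacle, is showing that this divisor pairing descends to~$\PPicX$. It kills $2\DivX$ trivially, so everything reduces to the reciprocity identity $\Legendre{\la}{\divX\mu}=1$ for every $\mu\in\un$. I would deduce this from Hilbert reciprocity exactly as in the proof of Lemma~\ref{lem:classes_to_points}: because $\la\in\SingX$ has even valuation at every point, the tame quadratic Hilbert symbol formula collapses to $(\mu,\la)_\gq=\Legendre{\la}{\gq}^{\ord_\gq\mu}$ at each $\gq\in X$. Taking the product over all points and invoking $\prod_{\gq\in X}(\mu,\la)_\gq=1$ then gives $\prod_\gq\Legendre{\la}{\gq}^{\ord_\gq\mu}=\Legendre{\la}{\divX\mu}=1$. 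Consequently $\Legendre{\la}{\cdot}$ factors through $\PicX/2\PicX=\PPicX$, and combined with multiplicativity in~$\la$ we obtain a well-defined $\FF_2$-bilinear pairing $\SingX\times\PPicX\to\{\pm1\}$, $(\la,\class{\gp})\mapsto\Legendre{\la}{\gp}$.

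It then remains only to expand in coordinates. The compatibility of~$\SB$ with~$\class{\GB}$ says precisely that the pairing's Gram matrix is the identity: $\Legendre{\beta_i}{\gb_j}=(-1)^{\delta_{ij}}$. Writing $\la=\beta_1^{\epsilon_1}\dotsm\beta_k^{\epsilon_k}$ in~$\SingX$ and $\class{\gp}+2\PicX=\varepsilon_1\class{\gb_1}+\dotsb+\varepsilon_k\class{\gb_k}$ in~$\PPicX$, bilinearity yields $\Legendre{\la}{\gp}=\prod_{i,j}\Legendre{\beta_i}{\gb_j}^{\epsilon_i\varepsilon_j}=(-1)^{\sum_i\epsilon_i\varepsilon_i}$, which is the claim. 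Apart from the reciprocity step, this is routine linear algebra over~$\FF_2$.
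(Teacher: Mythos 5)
Your proof is correct, and its engine is exactly the paper's: Hilbert reciprocity applied to the symbols $(\mu,\la)_\gq$, the collapse $(\mu,\la)_\gq=\Legendre{\la}{\gq}^{\ord_\gq\mu}$ forced by $\la\in\SingX$ having even valuation everywhere, and compatibility supplying the values $\Legendre{\beta_j}{\gb_i}=(-1)^{\delta_{ij}}$. The organization, however, is genuinely different. The paper works with one concrete element: it picks $\mu\in\un$ with $\divX\mu=\gp+\sum_{i\leq k}\varepsilon_i\gb_i+2\SD$, applies reciprocity to that single $\mu$ to move the symbol from $\gp$ to the points $\gb_i$, and concludes via $(\la,\mu)_\gp=\Legendre{\la}{\gp}$, which holds because $\ord_\gp\mu$ is odd while $\ord_\gp\la$ is even. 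You instead prove the universal statement that $\SD\mapsto\Legendre{\la}{\SD}$ annihilates \emph{every} principal divisor, so that the symbol descends to a well-defined $\FF_2$-bilinear pairing $\SingX\times\PPicX\to\{\pm1\}$, after which the proposition is the coordinate expansion of a pairing whose Gram matrix is the identity. What your route buys: the pairing is a reusable object --- Proposition~\ref{prop:Pic_class_by_Legendre}, the unnumbered proposition on $\Dl{X\setminus\{\gp\}}$, and Corollary~\ref{cor:uniq_compatible_basis} all become formal consequences of it (coordinates are recovered by pairing against the dual basis, and the pairing visibly depends on $\gp$ only through $\class{\gp}+2\PicX$), with no further reciprocity arguments needed. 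What the paper's route buys: brevity --- it verifies only the single instance of reciprocity it needs and avoids setting up the divisor-level extension of the symbol. The one point you should make explicit in a final write-up is that $\Legendre{\la}{\gq}$ is defined at \emph{every} point $\gq\in X$, not merely at $\gp$ and the $\gb_i$, precisely because $\la\in\SingX$ has even valuation everywhere; this is what makes your divisor-level homomorphism, and hence the descent, meaningful.
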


\begin{proof}
The assertion is trivially true when $\gp\in \GB$, thus without loss of generality we may assume that $\gp\in X\setminus \GB$. By the assumption we have
\[
\class{\gp} \equiv \sum_{i\leq k}\varepsilon_i\class{\gb_i} \pmod{2\PicX}
\qquad\text{and}\qquad
\la = \prod_{j\leq k}\beta_j^{\epsilon_j}.
\]
Therefore there is a divisor $\SD\in \DivX$ and an element $\mu\in\un$ such that
\[
\divX\mu = \gp + \sum_{i\leq k}\varepsilon_i\gb_i + 2\SD.
\]
Using Hilbert reciprocity law, we write
\[
(\la,\mu)_\gp
= \prod_{\gq\neq \gp} (\la,\mu)_\gq
= \prod_{i\leq k} (\la,\mu)_{\gb_i}^{\varepsilon_i}
= \prod_{i,j\leq k} (\beta_j,\mu)_{\gb_i}^{\varepsilon_i\epsilon_i}
\]
The element~$\beta_j$ is a local square at every~$\gb_i$ except~$\gb_j$. Hence the above formula simplifies to:
\[
(\la,\mu)_\gp
= \prod_{i\leq k}(-1)^{\varepsilon_i\epsilon_i} = (-1)^{\sum\varepsilon_i\epsilon_i}.
\]
Now $\ord_\gp\mu\odd$ and $\la\in \SingX$, hence $\ord_\gp\la\even$. Therefore the Hilbert symbol $(\la,\mu)_\gp$ is the same as the Legendre symbol $\Legendre{\la}{\gp}$.
\end{proof}

\begin{prop}
If $\gp, \gq\in X$ are two points whose classes are congruent modulo $2\PicX$. Then $\Dl{X\setminus\{\gp\}} = \Dl{X\setminus\{\gq\}}$.
\end{prop}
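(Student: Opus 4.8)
The plan is to reduce the asserted equality of subgroups to a single statement about Legendre symbols and then to read off the conclusion from Proposition~\ref{prop:Legendre_from_coords}. First I would unwind the definition of $\Dl{X\setminus\{\gp\}}$. Since the only point lying outside $Y := X\setminus\{\gp\}$ is $\gp$ itself, the intersection $\bigcap_{\gr\notin Y}\squares{K_\gr}$ collapses to $\squares{K_\gp}$, so that
\[
\Dl{X\setminus\{\gp\}} = \SingX\cap \squares{K_\gp} = \bigl\{ \la\in\SingX\st \la\in\squares{K_\gp}\bigr\},
\]
and symmetrically for~$\gq$. Thus it suffices to prove that a square class $\la\in\SingX$ is a local square at~$\gp$ if and only if it is a local square at~$\gq$.

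Next I would translate the local-square condition into the language of Legendre symbols. Any $\la\in\SingX$ satisfies $\ord_\gp\la\even$, so we may pick a representative that is a unit at~$\gp$; since the residue characteristic is odd, Hensel's lemma shows that such a unit is a local square exactly when its residue is a square in $K(\gp)$, which is precisely the condition $\Legendre{\la}{\gp}=1$. Hence
\[
\Dl{X\setminus\{\gp\}} = \bigl\{\la\in\SingX\st \Legendre{\la}{\gp}=1\bigr\},
\]
and likewise for~$\gq$. The problem is now reduced to showing $\Legendre{\la}{\gp}=\Legendre{\la}{\gq}$ for every $\la\in\SingX$.

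Finally I would appeal to Proposition~\ref{prop:Legendre_from_coords}. Fixing a subset $\GB\subseteq X$ whose classes form a basis of $\PPicX$ together with a compatible basis of~$\SingX$, that proposition expresses the Legendre symbol as $\Legendre{\la}{\gp}=(-1)^{\sum\varepsilon_i\epsilon_i}$, where the $\varepsilon_i$ are the coordinates of $\class{\gp}+2\PicX$ with respect to $\class{\GB}$ and the $\epsilon_i$ are the coordinates of~$\la$. By hypothesis $\class{\gp}\equiv\class{\gq}\pmod{2\PicX}$, so $\gp$ and~$\gq$ have identical coordinate vectors $(\varepsilon_i)$; consequently $\Legendre{\la}{\gp}=\Legendre{\la}{\gq}$ for every $\la\in\SingX$, and the two groups coincide.

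I expect the only genuine subtlety to be the identification carried out in the second paragraph---verifying that for classes in $\SingX$ the passage between \emph{local square at~$\gp$} and \emph{trivial Legendre symbol} is legitimate, which rests entirely on the even valuation guaranteed by membership in~$\SingX$. Once that identification is secured, the statement is an immediate corollary of Proposition~\ref{prop:Legendre_from_coords}; alternatively one could bypass the proposition and obtain $\Legendre{\la}{\gp}\Legendre{\la}{\gq}=1$ directly from Hilbert reciprocity applied to the symbol $(\la,\mu)$, where a function $\mu\in\un$ with $\divX\mu=\gp-\gq-2\SD$ realizes the assumed congruence of divisor classes.
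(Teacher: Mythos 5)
Your proposal is correct and follows essentially the same route as the paper: both reduce membership in $\Dl{X\setminus\{\gp\}}$ to the condition $\Legendre{\la}{\gp}=1$ and then invoke Proposition~\ref{prop:Legendre_from_coords}, noting that $\gp$ and $\gq$ have identical coordinate vectors with respect to a fixed basis $\class{\GB}$ of $\PPicX$, so all Legendre symbols agree. The extra care you take in justifying the passage between ``local square at $\gp$'' and ``trivial Legendre symbol'' for classes of even valuation is legitimate and is the same identification the paper uses implicitly.
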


\begin{proof}
Fix a subset $\GB = \{\gb_1, \dotsc, \gb_k\}$ of~$X$ such that $\class\GB$ is a basis of $\PPicX$. Further let $\SB = \{\beta_1, \dotsc, \beta_k\}\subset \SingX$ be a compatible basis of~$\SingX$. By the assumption we have
\[
\class{\gp} + 2\PicX = \class{\gq} + 2\PicX.
\]
Hence there are $\varepsilon_1, \dotsc, \varepsilon_k\in \FF_2$ such that
\[
\class{\gp} \equiv \class{\gq} \equiv \sum_{i\leq k}\varepsilon_i\class{\gb_i}\pmod{2\PicX}.
\]
Take any $\mu\in \Dl{X\setminus\{\gp\}}\subset \SingX$ and let $\epsilon_1, \dotsc, \epsilon_k$ be its coordinates with respect to~$\SB$. We then have
\[
1 = \Legendre{\mu}{\gp} = (-1)^{\sum\varepsilon_i\epsilon_i} = \Legendre{\mu}{\gq}.
\]
This means that $\mu\in \squares{K_\gq}\cap \SingX = \Dl{X\setminus\{\gq\}}$ and so $\Dl{X\setminus\{\gp\}}\subseteq \Dl{X\setminus\{\gq\}}$. The opposite inclusion follows by symmetry.
\end{proof}

\begin{prop}\label{prop:Pic_class_by_Legendre}
Let~$\GB$ and~$\SB$ be as in Proposition~\ref{prop:Legendre_from_coords}. The class of a point $\gp\in X$ has coordinates $\varepsilon_1, \dotsc, \varepsilon_k$ with respect to the basis~$\class\GB$ if and only if
\[
\Legendre{\beta_i}{\gp} = (-1)^{\varepsilon_i}\qquad\text{for every }1\leq i \leq k.
\]
\end{prop}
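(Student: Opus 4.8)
The plan is to read off the result directly from Proposition~\ref{prop:Legendre_from_coords}, which already computes every Legendre symbol $\Legendre{\la}{\gp}$ in terms of the coordinates of $\class{\gp}$ and of~$\la$ with respect to the dual bases $\class{\GB}$ and~$\SB$. Both implications should reduce to specializing that formula to $\la = \beta_i$ and exploiting the fact that $\SB$ is the coordinate basis of~$\SingX$.

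First I would fix $\gp\in X$ and let $\varepsilon_1, \dotsc, \varepsilon_k$ be its actual coordinates with respect to $\class{\GB}$ (these exist and are unique since $\class{\GB}$ is a basis of $\PPicX$). For the forward implication, I apply Proposition~\ref{prop:Legendre_from_coords} with $\la = \beta_i$. The coordinates of $\beta_i$ with respect to the basis~$\SB$ are simply the standard unit vector $e_i$, i.e.\ $\epsilon_j = \delta_{ij}$. Plugging this into the formula $\Legendre{\beta_i}{\gp} = (-1)^{\sum_j \varepsilon_j \epsilon_j}$ collapses the sum to the single term $\varepsilon_i$, giving $\Legendre{\beta_i}{\gp} = (-1)^{\varepsilon_i}$ for every $i$, as required.

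For the converse, suppose instead that $\Legendre{\beta_i}{\gp} = (-1)^{\delta_i}$ for some prescribed $\delta_1, \dotsc, \delta_k\in \FF_2$; I must show the true coordinates $\varepsilon_i$ of $\class{\gp}$ equal $\delta_i$. But the forward direction already shows $\Legendre{\beta_i}{\gp} = (-1)^{\varepsilon_i}$, so $(-1)^{\delta_i} = (-1)^{\varepsilon_i}$, hence $\delta_i = \varepsilon_i$ in $\FF_2$ for each~$i$. Thus the Legendre data determines the coordinates uniquely and the equivalence holds.

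The only point requiring a moment of care—really the sole substantive step—is justifying that $\beta_i$ has coordinate vector $e_i$; this is immediate from $\SB = \{\beta_1, \dotsc, \beta_k\}$ being the basis with respect to which coordinates are taken, so there is no genuine obstacle here. I would therefore keep the argument short: state the coordinates of $\beta_i$, invoke Proposition~\ref{prop:Legendre_from_coords} once, and conclude both directions from the resulting identity $\Legendre{\beta_i}{\gp} = (-1)^{\varepsilon_i}$.
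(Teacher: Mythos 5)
Your proposal is correct and follows exactly the paper's own argument: specialize Proposition~\ref{prop:Legendre_from_coords} to $\la = \beta_i$, whose coordinate vector with respect to~$\SB$ is the $i$-th unit vector, so the exponent sum collapses to $\varepsilon_i$, and then obtain the converse by comparing the prescribed exponents with the true coordinates via the forward implication. No differences worth noting.
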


\begin{proof}
Let $\class{\gp} \equiv \sum_{i\leq k}\varepsilon_i \class{\beta_i}\pmod{2\PicX}$, it follows from Proposition~\ref{prop:Legendre_from_coords} that
\[
\Legendre{\beta_i}{\gp} = (-1)^{\varepsilon_1\cdot 0 + \dotsb + \varepsilon_i\cdot 1 + \dotsb + \varepsilon_k\cdot 0} = (-1)^{\varepsilon_i}.
\]
This proves one implication. Conversely, assume that $\Legendre{\beta_i}{\gp} = (-1)^{\varepsilon_i}$ for $i\leq k$ and the coordinates of the class of~$\gp$ are $\varepsilon_1', \dotsc, \varepsilon_k'\in \FF_2$. By the previous part, for every $i\leq k$ we have
\[
(-1)^{\varepsilon_i} = \Legendre{\beta_i}{\gp} = (-1)^{\varepsilon_i'}.
\]
Consequently $\varepsilon_i = \varepsilon_i'$.
\end{proof}

\begin{cor}\label{cor:uniq_compatible_basis}
For every basis of~$\SingX$ a compatible basis of $\PPicX$ is determined uniquely.
\end{cor}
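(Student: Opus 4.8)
The plan is to read off uniqueness directly from Proposition~\ref{prop:Pic_class_by_Legendre}, which already records the coordinates of a Picard class in terms of its Legendre symbols against the basis $\SB$. First I would recall that existence of a compatible basis is not in question: starting from the given basis $\SB = \{\beta_1, \dotsc, \beta_k\}$ of~$\SingX$, Theorem~\ref{thm:Hecke_Satz_169} produces compatible points, and the argument in Theorem~\ref{thm:EE_X=PPicX} shows that their classes constitute a basis of~$\PPicX$. Thus at least one compatible pair $(\GB, \SB)$ is available, and the real content of the corollary is that no second one can occur.

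Before the main step I would record the (already implicit) observation that for $\la\in\SingX$ the value $\Legendre{\la}{\gp}$ depends only on $\class{\gp}+2\PicX$. This is immediate from Proposition~\ref{prop:Legendre_from_coords}, where the right-hand side $(-1)^{\sum\varepsilon_i\epsilon_i}$ sees the point~$\gp$ only through its coordinates in~$\PPicX$. Consequently ``compatibility with~$\SB$'' is a genuine condition on elements of~$\PPicX$, independent of the choice of point representatives, so the statement of the corollary is meaningful.

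For uniqueness proper, suppose two subsets $\GB = \{\gb_1,\dotsc,\gb_k\}$ and $\GB' = \{\gb_1',\dotsc,\gb_k'\}$ of~$X$ both give bases $\class\GB$ and $\class{\GB'}$ of~$\PPicX$ compatible with~$\SB$. Fixing the pair $(\GB,\SB)$, I would apply Proposition~\ref{prop:Pic_class_by_Legendre} to each point~$\gb_j'$: its coordinates $\varepsilon_1,\dotsc,\varepsilon_k$ with respect to $\class\GB$ are characterized by $\Legendre{\beta_i}{\gb_j'} = (-1)^{\varepsilon_i}$. Compatibility of~$\GB'$ with~$\SB$ forces $\Legendre{\beta_i}{\gb_j'} = 1$ for $i\neq j$ and $\Legendre{\beta_j}{\gb_j'} = -1$, whence $\varepsilon_i = 0$ for $i\neq j$ and $\varepsilon_j = 1$. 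Therefore $\class{\gb_j'} \equiv \class{\gb_j}\pmod{2\PicX}$ for every~$j$, i.e.\ $\class{\GB'} = \class\GB$ as indexed bases of~$\PPicX$. I do not anticipate any real obstacle: the whole statement is essentially a repackaging of Proposition~\ref{prop:Pic_class_by_Legendre}, which says that the tuple $\bigl(\Legendre{\beta_1}{\gp},\dotsc,\Legendre{\beta_k}{\gp}\bigr)$ determines the coordinates of $\class\gp$ in~$\PPicX$; the only point deserving care is the well-definedness remark above, which guarantees that the prescribed compatibility pattern pins down each $\class{\gb_j'}$ unambiguously.
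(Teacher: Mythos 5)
Your proof is correct and follows essentially the same route as the paper: both arguments use compatibility to pin down the Legendre symbols $\Legendre{\beta_i}{\gb_j'}$ and then invoke Proposition~\ref{prop:Pic_class_by_Legendre} to conclude that each $\class{\gb_j'}$ is congruent to $\class{\gb_j}$ modulo $2\PicX$. The extra remarks on existence and well-definedness are harmless but not needed for the uniqueness claim.
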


\begin{proof}
Fix a basis $\SB = \{\beta_1, \dotsc, \beta_k\}$ of~$\SingX$. Let $\class\GB = \bigl\{\class{\gb_i}+2\PicX\st i\leq k\bigr\}$ and $\class\GC = \bigl\{\class{\gc_i}+ 2\PicX\st i\leq k\bigr\}$ be two bases of $\PPicX$ determined by some subsets
\[
\GB = \{\gb_1, \dotsc, \gb_k\},\qquad \GC = \{\gc_1, \dotsc, \gc_k\}
\]
of~$X$. By the compatibility we have
\[
\Legendre{\beta_j}{\gb_i} = \Legendre{\beta_j}{\gc_i}
\]
for all $i,j\leq k$. Therefore Proposition~\ref{prop:Pic_class_by_Legendre} implies that the points $\gb_i$ and~$\gc_i$ are congruent modulo~$2\PicX$ for every index~$i$.
\end{proof}

\begin{rem}
Let $\GB$ and~$\SB$ be as in Proposition~\ref{prop:Legendre_from_coords}. In the proof of Theorem~\ref{thm:EE_X=PPicX} we constructed an isomorphism $\Phi:\SingX\iso \PPicX$ that sent $\beta_i\in \SB$ to $\class{\gb_i} + 2\PicX$. Corollary~\ref{cor:uniq_compatible_basis} asserts that the basis~$\class\GB$ of $\PPicX$ --- hence also the isomorphism~$\Phi$ --- are uniquely determined by~$\SB$. One may wonder whether $\Phi$ really depends on the choice of the basis~$\SB$. In fact it does. Different bases give rise to different $\Phi$'s. To see this phenomenon happen, take a basis $\SB = \{\beta_1, \dotsc, \beta_k\}$ of~$\SingX$ and the (unique) basis $\class{\GB}$ of $\PPicX$, where $\GB = \{ \gb_1, \dotsc, \gb_k\}\subset X$. Take the isomorphism $\Phi:\SingX\iso \PPicX$ that sends $\beta_i$ to $\class{\gb_i} + 2\PicX$. Now, construct another basis $\SB' := \{\beta_1', \dotsc, \beta_k'\}$ of~$\SingX$ setting
\[
\beta_1' := \beta_1\beta_2\qquad\text{and}\qquad \beta_i' := \beta_i\quad\text{for }i\geq 2.
\]
Further let $\gp\in X$ be such that
\[
\class{\gp}\equiv \Phi(\beta_1')\equiv \class{\gb_1 + \gb_2}\pmod{2\PicX}.
\]
Using Proposition~\ref{prop:Legendre_from_coords} we compute
\[
\Legendre{\beta_1'}{\gp} = (-1)^{1\cdot 1 + 1\cdot 1} = 1.
\]
Therefore the class of~$\gp$ cannot be the first vector of a basis of~$\PPicX$ compatible with~$\SB'$. This proves that the isomorphism $\Phi'$ associated to the basis~$\SB'$ differ from~$\Phi$.
\end{rem}

\section{Global square theorem}
The well known Global Square Theorem (GST) says that an element of a global field is locally a square \emph{almost everywhere} if and only if it is a square globally if and only if it is a local square \emph{everywhere}. In this section we derive an analog of this theorem for even points. First, however, we gather a few criteria for evenness of a point. Some of them were already proved in \cite{CKR18}. Nevertheless we repeat them here to make the paper self-contained and easier to read.

\begin{prop}\label{prop:even_criteria}
Let $\gp\in X$ be a point. Denote $Y := X\setminus \{\gp\}$. The following conditions are equivalent:
\begin{enumerate}
\item\label{it:even} $\gp$ is even,
\item\label{it:exists_la} there is $\lap\in \SingXp$ such that $\ord_\gp\lap\odd$,
\item\label{it:EE_X=Dl_Y} $\SingX = \Dl{Y}$,
\item\label{it:EE_X=PPicY} $\SingX\cong \PPicY$,
\item\label{it:index2} $[\SingY:\SingX] = 2$.
\end{enumerate}
\end{prop}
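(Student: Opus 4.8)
The plan is to prove that each of \eqref{it:exists_la}, \eqref{it:EE_X=Dl_Y}, \eqref{it:EE_X=PPicY} and \eqref{it:index2} is equivalent to evenness~\eqref{it:even}, grouping \eqref{it:exists_la} together with \eqref{it:index2} because both are statements about the valuation at~$\gp$. Throughout I would use two reformulations of the data. First, $\SingXp$ is simply $\SingY$ for the affine curve $Y := X\setminus\{\gp\}$, so it consists of the square classes with even valuation at every point of~$X$ \emph{except possibly}~$\gp$. Second, the only point lying outside~$Y$ is~$\gp$ itself, so the intersection defining $\Dl{Y}$ collapses to $\Dl{Y} = \SingX\cap\squares{K_\gp}$; since $\Dl{Y}\subseteq\SingX$ always, condition~\eqref{it:EE_X=Dl_Y} asserts precisely that every square class with everywhere-even valuation is already a local square at~$\gp$.

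For \eqref{it:exists_la}$\Leftrightarrow$\eqref{it:index2}$\Leftrightarrow$\eqref{it:even} I would argue directly with principal divisors. The reduction $\ord_\gp\colon\SingY\to\ZZ/2\ZZ$, $\la\mapsto\ord_\gp\la+2\ZZ$, is a well-defined $\FF_2$-linear map whose kernel is exactly~$\SingX$; hence $[\SingY:\SingX]$ equals the cardinality of its image and is therefore either~$1$ or~$2$, being~$2$ if and only if some $\lap\in\SingY$ has $\ord_\gp\lap\odd$. This is \eqref{it:exists_la}$\Leftrightarrow$\eqref{it:index2}. To tie this to evenness, observe that if $\gp$ is even then $\divX\mu=\gp+2\SD$ for suitable $\mu\in\un$ and $\SD\in\DivX$, and the square class of~$\mu$ then lies in~$\SingY$ with $\ord_\gp\mu\odd$; conversely, a representative $\mu$ of such an element has even valuation off~$\gp$ and $\ord_\gp\mu\odd$, so $\divX\mu=\gp+2\SD$ for the divisor $\SD$ obtained by halving $\ord_\gp\mu-1$ at~$\gp$ and the remaining valuations elsewhere, which exhibits $\class{\gp}\in2\PicX$.

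The equivalence \eqref{it:even}$\Leftrightarrow$\eqref{it:EE_X=Dl_Y} I would read off the Legendre pairing already developed. Fix a subset $\GB=\{\gb_1,\dotsc,\gb_k\}$ whose classes form a basis of~$\PPicX$ and a compatible basis $\beta_1,\dotsc,\beta_k$ of~$\SingX$. For $\la\in\SingX$ the valuation $\ord_\gp\la$ is even, so $\Legendre{\la}{\gp}=1$ is equivalent to $\la\in\squares{K_\gp}$. By multiplicativity of the Legendre symbol in its upper entry, $\SingX=\Dl{Y}$ holds if and only if $\Legendre{\beta_i}{\gp}=1$ for every~$i$; by Proposition~\ref{prop:Pic_class_by_Legendre} this says exactly that all coordinates of $\class{\gp}+2\PicX$ vanish, i.e. that $\gp$ is even.

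Finally, for \eqref{it:even}$\Leftrightarrow$\eqref{it:EE_X=PPicY} I would pass through the excision sequence $\ZZ\xrightarrow{n\mapsto n\class{\gp}}\PicX\to\PicY\to0$ attached to the open immersion $Y\hookrightarrow X$, which identifies $\PicY$ with $\PicX/\langle\class{\gp}\rangle$ and hence gives $\PPicY\cong\PPicX/\langle\class{\gp}+2\PicX\rangle$. If $\gp$ is even the generator vanishes and $\PPicY\cong\PPicX\cong\SingX$ by Theorem~\ref{thm:EE_X=PPicX}; if $\gp$ is not even the image of~$\class{\gp}$ is a nonzero vector, so $\dim_{\FF_2}\PPicY=\dim_{\FF_2}\SingX-1$ and the two spaces cannot be isomorphic. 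I expect this last equivalence to be the main obstacle: it needs the precise excision description of~$\PicY$ and the verification that reducing modulo~$2$ deletes exactly one dimension, and it relies on the finite-dimensionality of~$\SingX$ (from Theorem~\ref{thm:EE_X=PPicX}) to make the dimension count decisive.
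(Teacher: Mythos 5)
Your proof is correct, but it takes a genuinely different route from the paper. The paper disposes of the equivalences almost entirely by citation: $(1)\Leftrightarrow(2)$ and $(1)\Leftrightarrow(3)$ are quoted from earlier work of Czoga{\l}a--Koprowski--Rothkegel (Propositions~3.2 and~3.4 there), while $(1)\Leftrightarrow(4)$ and $(1)\Leftrightarrow(5)$ are obtained by combining isomorphisms from that paper ($\PPicY\cong \sfrac{\PicZeroX}{2\PicZeroX}\oplus\sfrac{\ZZ}{2\ZZ}$ and $\SingY\cong\PPicY\oplus\sfrac{\ZZ}{2\ZZ}$) with the decomposition~\eqref{eq:PicX_decomposition} and Theorem~\ref{thm:EE_X=PPicX}. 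You instead prove everything in-house: $(2)\Leftrightarrow(5)$ via the $\FF_2$-linear map $\ord_\gp\colon\SingY\to\sfrac{\ZZ}{2\ZZ}$ whose kernel is exactly $\SingX$ (which also explains transparently why the index can never exceed~$2$); $(1)\Leftrightarrow(2)$ by direct manipulation of principal divisors; $(1)\Leftrightarrow(3)$ through the Legendre-coordinate machinery of Proposition~\ref{prop:Pic_class_by_Legendre}; and $(1)\Leftrightarrow(4)$ via the excision presentation $\PicY\cong\sfrac{\PicX}{\langle\class{\gp}\rangle}$, hence $\PPicY\cong\sfrac{\PPicX}{\langle\class{\gp}+2\PicX\rangle}$, plus a dimension count against Theorem~\ref{thm:EE_X=PPicX}. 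All steps check out: the identification $\Dl{Y}=\SingX\cap\squares{K_\gp}$, the reduction of $\SingX\subseteq\squares{K_\gp}$ to the basis elements $\beta_i$ (local squares form a group), and the fact that quotienting a finite-dimensional space by a nonzero vector drops the dimension by exactly one are all sound; note only that your argument for $(1)\Leftrightarrow(3)$ silently uses the existence of a compatible pair of bases $(\GB,\SB)$, which is supplied by the proof of Theorem~\ref{thm:EE_X=PPicX}, and the standard fact (odd residue characteristic) that a square class of even valuation is a local square iff its Legendre symbol equals~$1$. What each approach buys: the paper's version is shorter and defers to published lemmas, whereas yours makes the proposition genuinely self-contained within this paper's Section~3 apparatus, at the cost of redoing work that the authors preferred to cite.
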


\begin{proof}
The equivalence $(\ref{it:even}\iff\ref{it:exists_la})$ was proved in \cite[Proposition~3.2]{CKR18} and the equivalence $(\ref{it:even}\iff\ref{it:EE_X=Dl_Y})$ was proved in \cite[Proposition~3.4]{CKR18}. Further, \cite[Proposition~2.7]{CKR18} implies that the point~$\gp$ is even if and only if
\[
\PPicY\cong \sfrac{\PicZeroX}{2\PicZeroX} \oplus \sfrac{\ZZ}{2\ZZ}.
\]
Thus, it follows from Eq.~\eqref{eq:PicX_decomposition} that~$\gp$ is even of and only if
\[
\PPicY\cong \PPicX.
\]
Theorem~\ref{thm:EE_X=PPicX} asserts now that the right hand side is isomorphic to~$\SingX$ and this proves the equivalence $(\ref{it:even}\iff\ref{it:EE_X=PPicY})$. Finally, by \cite[Proposition~2.3.(1)]{CKR18} we have
\[
\SingY \cong \PPicY \oplus \sfrac{\ZZ}{2\ZZ}.
\]
Hence the equivalence $(\ref{it:even}\iff\ref{it:index2})$ follows from the previous part.
\end{proof}

The next lemma can be viewed as a certain variant of Dirichlet density theorem.

\begin{lem}\label{lem:density_of_cosset}
Let $k := \dim_{\FF_2}\PPicX$. For a coset $\class{\SD}+2\PicX\in \PPicX$ and a square class $\la\in \sqgd{K}$, not in~$\SingX$, denote:
\begin{gather*}
A   := \bigl\{ \gp\in X\st \class{\gp}\equiv \class{\SD}\pmod{2\PicX}\bigr\}, \\
A_+ := \bigl\{ \gp\in A\st \la\in\squares{K_\gp}\bigr\}, \qquad
A_- := \bigl\{ \gp\in A\st \la\notin\squares{K_\gp}\bigr\}.
\end{gather*}
Then:
\begin{enumerate}
\item the density of the set~$A$ equals $\delta A= \sfrac{1}{2^k}$,
\item the densities of~$A_+$ and~$A_-$ equal $\delta A_+ = \delta A_- = \sfrac{1}{2^{k+1}}$.
\end{enumerate}
\end{lem}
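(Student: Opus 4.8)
The plan is to reduce both parts to the function-field Hecke density result, Theorem~\ref{thm:Hecke_Satz_169}, by re-expressing membership in $A$, $A_+$ and $A_-$ through Legendre symbols. First I would fix a basis $\SB=\{\beta_1,\dotsc,\beta_k\}$ of the $\FF_2$-space $\SingX$ together with its (unique) compatible basis $\class{\GB}$ of $\PPicX$, furnished by Theorem~\ref{thm:EE_X=PPicX} and Corollary~\ref{cor:uniq_compatible_basis}; here $k=\dim_{\FF_2}\SingX=\dim_{\FF_2}\PPicX$. Let $\varepsilon_1,\dotsc,\varepsilon_k\in\FF_2$ be the coordinates of $\class{\SD}+2\PicX$ in $\class{\GB}$. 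By Proposition~\ref{prop:Pic_class_by_Legendre}, a point $\gp$ lies in $A$ exactly when $\Legendre{\beta_i}{\gp}=(-1)^{\varepsilon_i}$ for all $i\leq k$, so $A$ is precisely the set appearing in Theorem~\ref{thm:Hecke_Satz_169} for the classes $\beta_1,\dotsc,\beta_k$.

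Part~(1) then follows at once: the $\beta_i$ form a basis of $\SingX\subseteq\sqgd{K}$, hence are linearly independent over $\FF_2$, and Theorem~\ref{thm:Hecke_Satz_169} yields $\delta A=\frac{1}{2^k}$.

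For part~(2) the idea is to adjoin $\la$ as a $(k+1)$-st class. Since $\la\notin\SingX=\langle\beta_1,\dotsc,\beta_k\rangle$, the classes $\beta_1,\dotsc,\beta_k,\la$ are linearly independent in $\sqgd{K}$, so Theorem~\ref{thm:Hecke_Satz_169} again applies. The one thing to verify is that the local-square condition cutting $A$ into $A_+$ and $A_-$ matches a Legendre condition on $\la$ outside a density-zero set. The class $\la$ is a unit at all but the finitely many points of its zero/pole set $S$; discarding $S$ changes no densities, and for $\gp\notin S$, because $\FF_q$ has odd characteristic, Hensel's lemma gives $\la\in\squares{K_\gp}$ if and only if $\Legendre{\la}{\gp}=1$. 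Thus, up to $S$, the set $A_+$ is carved out by $\Legendre{\beta_i}{\gp}=(-1)^{\varepsilon_i}$ (for $i\leq k$) together with $\Legendre{\la}{\gp}=1$, and $A_-$ by the same conditions with $\Legendre{\la}{\gp}=-1$. Applying Theorem~\ref{thm:Hecke_Satz_169} to $\beta_1,\dotsc,\beta_k,\la$ with exponents $(\varepsilon_1,\dotsc,\varepsilon_k,0)$ and $(\varepsilon_1,\dotsc,\varepsilon_k,1)$ gives $\delta A_+=\delta A_-=\frac{1}{2^{k+1}}$; consistently, the disjoint decomposition $A=A_+\sqcup A_-$ recovers $\delta A_++\delta A_-=\frac{1}{2^k}$.

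The only delicate step I anticipate is this last matching of conditions: one must check that $\la\in\squares{K_\gp}$ agrees with $\Legendre{\la}{\gp}=1$ away from the finite set $S$, treating separately the unit points, the even-order zeros and poles, and the odd-order points that ramify in $K(\sqrt{\la})$ and where $\la$ is never a local square. As all exceptional points form a finite, hence density-zero, set, everything collapses to a clean application of the already-established Theorem~\ref{thm:Hecke_Satz_169}, with no further estimates required.
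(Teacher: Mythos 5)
Your proposal is correct and follows essentially the same route as the paper: express membership in $A$, $A_+$, $A_-$ via Legendre symbols using a compatible pair of bases and Proposition~\ref{prop:Pic_class_by_Legendre}, then invoke Theorem~\ref{thm:Hecke_Satz_169} for the $k$ classes $\beta_1,\dotsc,\beta_k$ and for the $k+1$ linearly independent classes $\beta_1,\dotsc,\beta_k,\la$. The only cosmetic differences are that the paper obtains $\delta A$ as $\delta A_+ + \delta A_-$ rather than by a separate application of the theorem, and your explicit handling of the finite set where $\ord_\gp\la\neq 0$ makes precise a point the paper leaves implicit.
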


\begin{proof}
Pick a subset $\GB := \{\gb_1, \dotsc, \gb_k\}$ of~$X$ such that classes of $\gb_1, \dotsc, \gb_k$ form a basis of $\PPicX$ and let $\SB := \{\beta_1, \dotsc, \beta_k\}$ be a compatible basis of~$\SingX$. There are uniquely determined elements $\varepsilon_1, \dotsc, \varepsilon_k\in \FF_2$ such that
\[
\class{\SD} \equiv \sum_{i\leq k} \varepsilon_i\class{\gb_i}\pmod{2\PicX}.
\]
It follows from Proposition~\ref{prop:Pic_class_by_Legendre} that $\class{\gp}$ is congruent to $\class{\SD}$ modulo $2\PicX$ if and only if
\[
\Legendre{\beta_i}{\gp} = (-1)^{\varepsilon_i}
\]
for every $i\leq k$.  The set $\SB\cup \{\la\}$ is linearly independent since $\la\not\in \SingX$. Thus we can write:
\[
A_\pm = \Bigl\{ \gp\in X\st \Legendre{\la}{\gp} = \pm1\quad\text{and}\quad \Legendre{\beta_i}{\gp} = (-1)^{\varepsilon_i}\quad\text{for every }i\leq k\Bigr\}.
\]
Consequently Theorem~\ref{thm:Hecke_Satz_169} asserts that $\delta A_\pm = \sfrac{1}{2^{k+1}}$. The set~$A$ is a disjoint union of~$A_+$ and~$A_-$, hence $\delta A = 2\cdot \sfrac{1}{2^{k+1}} = \sfrac{1}{2^k}$.
\end{proof}

As technical as the previous lemma may appear, it has two important consequences of a more general nature. First of all, it implies that even points exist in every global function field. In fact there are always infinitely many of them. The following result substantially strengthens \cite[Proposition~3.11]{CKR18}, which was proved using different, more elementary methods.

\begin{thm}\label{thm:density_of_EP}
The set of even points has a positive density.
\end{thm}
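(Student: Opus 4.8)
The plan is to recognize the set of even points as precisely one of the cosets whose density is computed in Lemma~\ref{lem:density_of_cosset}. By definition, a point $\gp\in X$ is even exactly when $\class{\gp}\in 2\PicX$, that is, when $\class{\gp}$ is congruent to the zero class modulo $2\PicX$. Hence the set of even points coincides \emph{literally} with the set $A$ of Lemma~\ref{lem:density_of_cosset} for the choice $\SD = 0$ (or any principal divisor), whose class is the neutral element of $\PPicX$. The whole content of the argument is this identification; once it is made, no further work is required.

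First I would invoke the finiteness of $\PicZeroX$ to guarantee that $k := \dim_{\FF_2}\PPicX$ is a finite nonnegative integer. This is where the hypothesis that $K$ is a global function field genuinely enters: via the splitting in Eq.~\eqref{eq:PicX_decomposition} together with the finiteness of the group of degree-zero divisor classes, so that $\PPicX\cong\sfrac{\PicZeroX}{2\PicZeroX}\oplus\sfrac{\ZZ}{2\ZZ}$ is a finite-dimensional $\FF_2$-vector space. With $k$ finite, part~(1) of Lemma~\ref{lem:density_of_cosset}, applied to the trivial coset, yields at once that the density of the set of even points equals $\frac{1}{2^k}$, which is strictly positive.

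There is essentially no obstacle to overcome here; the theorem is a direct corollary of the preceding density computation. The only point demanding care is to state explicitly that, with $\SD$ chosen in the trivial class of $\PPicX$, the set $A$ of Lemma~\ref{lem:density_of_cosset} is exactly the set of even points, so that the asserted positive density $\frac{1}{2^k}$ follows immediately. I would also remark, as a free byproduct, that since the density is positive the even points form an infinite set, which recovers and strengthens the existence statement of \cite[Proposition~3.11]{CKR18}.
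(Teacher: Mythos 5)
Your proposal is correct and coincides with the paper's own proof, which reads in its entirety ``Apply the previous lemma to $\SD = 0$'': you identify the even points with the set $A$ of Lemma~\ref{lem:density_of_cosset} for the trivial coset and read off the density $\sfrac{1}{2^k} > 0$. The extra remarks you add (finiteness of $k$ via the finiteness of $\PicZeroX$, and infinitude of the set of even points as a byproduct) are sound and merely make explicit what the paper leaves implicit.
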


\begin{proof}
Apply the previous lemma to $\SD = 0$.
\end{proof}

Secondly, we may now prove the promised analog of GST. Here the group of squares is replaced by the group~$\SingX$ and instead of all the points of~$K$ we just take all even points of~$K$.

\begin{thm}\label{thm:GST}
For every $\la\in \sqgd{K}$, the following conditions are equivalent:
\begin{enumerate}
\item\label{it:EE_X} $\la\in \SingX$,
\item\label{it:square_everywhere} $\la$ is a local square at every even point,
\item\label{it:square_almost_everywhere} $\la$ is a local square at almost every even point.
\end{enumerate}
\end{thm}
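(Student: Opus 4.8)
The plan is to establish the cyclic chain of implications $(\ref{it:EE_X})\Rightarrow(\ref{it:square_everywhere})\Rightarrow(\ref{it:square_almost_everywhere})\Rightarrow(\ref{it:EE_X})$. Two of the three arrows carry no real content, and the whole argument rests on two facts already proved in the excerpt: the local-square characterisation of evenness in Proposition~\ref{prop:even_criteria} and the density computation in Lemma~\ref{lem:density_of_cosset}.

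For $(\ref{it:EE_X})\Rightarrow(\ref{it:square_everywhere})$ I would fix an arbitrary even point $\gp\in X$ and put $Y := X\setminus\{\gp\}$. Since $\gp$ is the only point lying outside $Y$, unwinding the definition of $\Dl{Y}$ gives $\Dl{Y} = \SingX\cap\squares{K_\gp}$. By the equivalence $(\ref{it:even}\iff\ref{it:EE_X=Dl_Y})$ of Proposition~\ref{prop:even_criteria}, evenness of~$\gp$ is precisely the assertion $\SingX = \Dl{Y} = \SingX\cap\squares{K_\gp}$, equivalently $\SingX\subseteq\squares{K_\gp}$. Hence every $\la\in\SingX$ is a local square at~$\gp$, and as $\gp$ ranged over all even points this is exactly~(\ref{it:square_everywhere}). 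The implication $(\ref{it:square_everywhere})\Rightarrow(\ref{it:square_almost_everywhere})$ is then immediate, since a square class that is a local square at \emph{every} even point is in particular a local square at \emph{almost every} even point.

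The remaining arrow $(\ref{it:square_almost_everywhere})\Rightarrow(\ref{it:EE_X})$ I would prove by contraposition. Assuming $\la\notin\SingX$, I apply Lemma~\ref{lem:density_of_cosset} to the trivial coset, i.e.\ with $\SD = 0$. Then $A = \bigl\{\gp\in X\st \class{\gp}\equiv 0 \pmod{2\PicX}\bigr\}$ is exactly the set of even points, and $A_- = \bigl\{\gp\in A\st \la\notin\squares{K_\gp}\bigr\}$ is the set of even points at which $\la$ fails to be a local square. The lemma yields $\delta A_- = \sfrac{1}{2^{k+1}} > 0$, where $k := \dim_{\FF_2}\PPicX$. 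Thus $\la$ is a non-square at a positive-density (in particular infinite) set of even points, so it cannot be a local square at almost every even point; this contradicts~(\ref{it:square_almost_everywhere}) and forces $\la\in\SingX$.

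I do not expect a serious obstacle here, since both genuine ingredients are already in place. The only point requiring care is the meaning of ``almost every'': the argument works because the exceptional set $A_-$ carries a strictly positive density $\sfrac{1}{2^{k+1}}$, hence is genuinely non-negligible, and it is this positivity --- rather than mere infinitude --- that makes the contrapositive bite regardless of whether ``almost every'' is read as ``all but finitely many'' or ``all but a set of density zero''.
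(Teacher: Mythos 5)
Your proposal is correct and follows exactly the paper's own route: the first implication via the equivalence $\SingX = \Dl{X\setminus\{\gp\}} = \SingX\cap\squares{K_\gp}$ from Proposition~\ref{prop:even_criteria}, the second as trivial, and the third by applying Lemma~\ref{lem:density_of_cosset} with $\SD = 0$ to get a positive-density set of even points where a class outside $\SingX$ fails to be a local square. The only difference is that you spell out the contrapositive density argument that the paper leaves as a one-line citation, which is a faithful (and welcome) elaboration rather than a different approach.
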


\begin{proof}
Assume that $\la\in \SingX$ and let $\gp\in X$ be an even point. By Proposition~\ref{prop:even_criteria} we have $\SingX = \Dl{X\setminus \{\gp\}} = \SingX\cap \squares{K_\gp}$ and so $\la\in \squares{K_\gp}$. This proves implication $(\ref{it:EE_X}\Longrightarrow\ref{it:square_everywhere})$. The implication $(\ref{it:square_everywhere}\Longrightarrow\ref{it:square_almost_everywhere})$ is trivial and the implication $(\ref{it:square_almost_everywhere}\Longrightarrow\ref{it:EE_X})$ follows from Lemma~\ref{lem:density_of_cosset}.
\end{proof}

\section{The graph of even points}\label{sec:graph}
We are now ready to generalize the relation~$\smile$, discussed in the introduction, to an arbitrary global function field. Take two even points $\gp, \gq\in X$. We say that $\gp, \gq$ are related if $\SingXp\subset \squares{K_\gq}$. We denote it then $\gp\smile \gq$. This relation was discovered in \cite{CKR18}, where it was used to study wild sets of self-equivalences of~$K$.

If $\gp\in X$ is an even point, then $[\SingXp: \SingX] = 2$ by Proposition~\ref{prop:even_criteria}. Consequently there is a (non-unique) square class $\lap\in \sqgd{K}$ such that $\SingXp$ is the disjoint union of~$\SingX$ and its coset $\lap\cdot \SingX$. In particular, $\gp$ is the unique point at which $\lap$ has an odd valuation (see condition~\ref{it:exists_la} in Proposition~\ref{prop:even_criteria}).

\begin{prop}\label{prop:smile_criteria}
Let $\gp, \gq\in X$ be two even points. The following conditions are equivalent:
\begin{enumerate}
\item\label{it:p_smile_q} $\gp\smile \gq$,
\item\label{it:lap_is_square} $\lap\in \squares{K_\gq}$,
\item\label{it:gq_splits} $\gq$ splits in $K(\sqrt{\lap})$.
\end{enumerate}
\end{prop}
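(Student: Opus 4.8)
The plan is to prove the two equivalences $(\ref{it:p_smile_q}\iff\ref{it:lap_is_square})$ and $(\ref{it:lap_is_square}\iff\ref{it:gq_splits})$ separately: the first using the coset decomposition of~$\SingXp$ together with Theorem~\ref{thm:GST}, the second using the classical correspondence between local squares and the splitting of a prime in a quadratic extension. Throughout I may assume $\gp\neq\gq$, since when $\gp=\gq$ all three conditions fail simultaneously: $\lap$ has odd valuation at $\gp=\gq$, so $\lap\notin\squares{K_\gq}$, the point ramifies in $K(\sqrt{\lap})$, and $\lap\in\SingXp\setminus\squares{K_\gq}$ shows $\SingXp\not\subset\squares{K_\gq}$.

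For $(\ref{it:p_smile_q}\iff\ref{it:lap_is_square})$, recall from the paragraph preceding the proposition that $\SingXp$ is the disjoint union of~$\SingX$ and the coset $\lap\cdot\SingX$. Hence $\gp\smile\gq$, i.e. $\SingXp\subset\squares{K_\gq}$, holds if and only if both $\SingX\subset\squares{K_\gq}$ and $\lap\cdot\SingX\subset\squares{K_\gq}$. The first inclusion is automatic: since $\gq$ is an even point, Theorem~\ref{thm:GST} (condition~\ref{it:square_everywhere}) guarantees that every element of~$\SingX$ is a local square at~$\gq$. Granting this, the second inclusion reduces to the single condition $\lap\in\squares{K_\gq}$, because for any $s\in\SingX\subset\squares{K_\gq}$ the product $\lap s$ lies in $\squares{K_\gq}$ precisely when $\lap$ does. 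This yields the desired equivalence.

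For $(\ref{it:lap_is_square}\iff\ref{it:gq_splits})$, observe that $\lap$ has odd valuation only at~$\gp$, so the quadratic extension $K(\sqrt{\lap})/K$ is unramified at $\gq\neq\gp$. The prime~$\gq$ then splits in $K(\sqrt{\lap})$ if and only if the polynomial $t^2-\lap(\gq)$ factors over the residue field $K(\gq)$, equivalently $\Legendre{\lap}{\gq}=1$, which is exactly the statement $\lap\in\squares{K_\gq}$. This is the same reasoning already employed in the proof of Theorem~\ref{thm:Hecke_Satz_169}.

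The main obstacle is the step establishing $\SingX\subset\squares{K_\gq}$: it is precisely here that the evenness of the \emph{target} point~$\gq$ is essential, and it is supplied by the even-point analog of the Global Square Theorem. Once this inclusion is in hand, both equivalences follow by elementary manipulation of cosets and the classical splitting criterion.
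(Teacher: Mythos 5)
Your proof is correct and follows essentially the same route as the paper: the coset decomposition $\SingXp = \SingX\dcup\lap\SingX$, the inclusion $\SingX\subset\squares{K_\gq}$ coming from the evenness of~$\gq$, and the classical splitting criterion for quadratic extensions. The only cosmetic difference is that you obtain $\SingX\subset\squares{K_\gq}$ by citing Theorem~\ref{thm:GST}, whereas the paper invokes Proposition~\ref{prop:even_criteria} (namely $\SingX=\Dl{X\setminus\{\gq\}}$) directly --- the very fact from which that implication of Theorem~\ref{thm:GST} is itself deduced.
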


\begin{proof}
The implication $(\ref{it:p_smile_q}\Longrightarrow\ref{it:lap_is_square})$ is trivial since $\lap\in \SingXp$. Next, assume that $\lap$ is a local square at~$\gq$. Select a representative of the square class~$\lap$ which has valuation~$0$ at~$\gq$. Abusing the notation slightly, denote it~$\lap$ again. The condition $\lap\in \squares{K_\gq}$ implies that $\lap(\gq)\in \squares{K(\gq)}$. Take a polynomial $f := t^2 - \lap\in K[t]$ and let $\overline{f} = t^2 - \lap(\gq)\in K(\gq)[t]$ be the reduction of~$f$ modulo~$\gq$. It is well known that $\gq$ splits in $K(\sqrt{\lap}) \cong \sfrac{K[t]}{\ideal{f}}$ if and only if $\overline{f}$ factors into linear terms, if and only if $\lap(\gq)$ is a square in the residue field $K(\gq)$. This proves $(\ref{it:lap_is_square}\Longrightarrow\ref{it:gq_splits})$. Finally assume that $\gq$ splits in $K(\sqrt{\lap})$. Then $\lap$ is a local square at~$\gq$. On the other hand, both~$\gp$ and~$\gq$ are even hence, by Proposition~\ref{prop:even_criteria}, we have 
\[
\SingXp = \SingX\dcup \lap\SingX = \Dl{X\setminus  \{\gq\}}\dcup \lap\Dl{X\setminus \{\gq\}} \subset \squares{K_\gq}.
\]
This shows the last implication and concludes the proof.
\end{proof}

\begin{rem}
Let us return for a moment to our introductory example. If $K$ is the field $\FF_q(t)$ of rational functions and $\gp, \gq$ are two prime ideals of $\FF_q[t]$ generated by some (irreducible) polynomials $f, g$ of even degrees, then we take $\lap = f$ and $\laq = g$. It is clear that $\gp\smile \gq$ if and only if $f$ is a local square at~$\gq$ if and only if $\Legendre{f}{g} = 1$. This shows that the above definition agrees with the one presented in the introduction.
\end{rem}

The following fact was proved already in~\cite{CKR18}. We repeat it here for the sake of completeness.

\begin{prop}[{\cite[Lemma~4.3]{CKR18}}]\label{prop:symmetry}
The relation~$\smile$ is symmetric.
\end{prop}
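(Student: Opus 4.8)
The plan is to reduce the symmetry of $\smile$ to a single application of the Hilbert reciprocity law, exactly as the non-transitivity example in the introduction was governed by quadratic reciprocity. By Proposition~\ref{prop:smile_criteria} the relation $\gp\smile\gq$ is equivalent to $\lap\in\squares{K_\gq}$, and because $\gq$ is even and $\lap$ is a unit there, this in turn amounts to the Legendre symbol condition $\Legendre{\lap}{\gq}=1$; symmetrically $\gq\smile\gp$ is equivalent to $\Legendre{\laq}{\gp}=1$. Thus it suffices to establish the reciprocity-type identity $\Legendre{\lap}{\gq}=\Legendre{\laq}{\gp}$, the case $\gp=\gq$ being trivial, so I assume $\gp\neq\gq$.

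First I would recall that $\lap$ has odd valuation only at $\gp$ and even valuation at every other point, since $\lap\in\SingXp$; likewise $\laq\in\Sing{X\setminus\{\gq\}}$ has odd valuation only at $\gq$. I then form the global product of Hilbert symbols $\prod_{\gr\in X}(\lap,\laq)_\gr$. At any point $\gr\notin\{\gp,\gq\}$ both $\lap$ and $\laq$ have even valuation, so after adjusting the square-class representatives to units the symbol $(\lap,\laq)_\gr$ is a symbol of two units over a local field of odd residue characteristic, hence trivial. Therefore the product collapses to $(\lap,\laq)_\gp\cdot(\lap,\laq)_\gq$, and the Hilbert reciprocity law forces $(\lap,\laq)_\gp=(\lap,\laq)_\gq$.

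Next I would evaluate the two surviving symbols through the tame symbol. At $\gp$ the element $\lap$ has odd valuation while $\laq$ is a unit, so $(\lap,\laq)_\gp=\Legendre{\laq}{\gp}$; by the same computation at $\gq$ one gets $(\lap,\laq)_\gq=\Legendre{\lap}{\gq}$. Combining these with the reciprocity identity yields $\Legendre{\laq}{\gp}=\Legendre{\lap}{\gq}$, which is precisely the desired equivalence $\gp\smile\gq\iff\gq\smile\gp$.

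The step I expect to require the most care is the local bookkeeping in the tame-symbol evaluation: one must choose representatives of the square classes with the correct valuations at each point and verify that the oddness of the residue characteristic makes the unit--unit symbols vanish and reduces the mixed symbols to Legendre symbols. Everything else is formal once Proposition~\ref{prop:smile_criteria} has translated $\smile$ into the local-square language.
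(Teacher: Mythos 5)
Your proof is correct and follows essentially the same route as the paper's: both rest on the Hilbert reciprocity law, the observation that $(\lap,\laq)_\gr$ vanishes at every $\gr\notin\{\gp,\gq\}$ because both classes have even valuation there, and the tame-symbol evaluation of the two surviving symbols. The only difference is cosmetic — you package the conclusion as the symmetric identity $\Legendre{\lap}{\gq}=\Legendre{\laq}{\gp}$, while the paper argues the single implication $\gp\smile\gq\Rightarrow\gq\smile\gp$ directly from $(\lap,\laq)_\gp\cdot(\lap,\laq)_\gq=1$.
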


\begin{proof}
Let $\lap$ and $\laq$ be squares classes corresponding to~$\gp$ and~$\gq$, respectively. Hilbert reciprocity law asserts that
\[
\prod_{\gr\in X} (\lap, \laq)_\gr = 1.
\]
If $\gr$ is neither~$\gp$ nor~$\gq$, then both $\lap$ and~$\laq$ have even valuations at~$\gr$ and so the Hilbert symbol $(\lap, \laq)_\gr$ vanishes. Therefore the above formula simplifies to
\[
(\lap,\laq)_\gp\cdot (\lap, \laq)_\gq = 1.
\]
If we assume in addition that $\gp\smile\gq$, then $\lap\in \squares{K_\gq}$ and so $(\lap,\laq)_\gq = 1$, which implies that also $(\lap, \laq)_\gp = 1$. Now, $\ord_\gp\lap \odd$, hence $\laq$ must be a local square at~$\gp$. It follows from Proposition~\ref{prop:even_criteria} that $\gq\smile \gp$, proving a symmetry of the relation.
\end{proof}

We may now define an (infinite undirected) graph $\GE = (V,E)$, whose vertices are the even points of$~X$ and edges are defined by the relation~$\smile$, that is:
\[
V = \bigl\{ \gp\in X\st \class{\gp}\in 2\PicX\bigr\},\qquad
E = \bigl\{ (\gp,\gq)\in V\times V\st \gp\smile \gq\bigr\}.
\]

\begin{prop}\label{prop:GE_not_complete}
No vertex of~$\GE$ is adjacent to all other vertices. In particular, $\GE$~is not complete.
\end{prop}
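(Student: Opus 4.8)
The plan is to fix an arbitrary even point $\gp\in V$ and exhibit a \emph{different} even point $\gq\neq \gp$ with $\gp\not\smile \gq$. Since $\gp$ was arbitrary, this shows that no vertex of $\GE$ is adjacent to all the others, and in particular $\GE$ cannot be complete. The tool I would reach for is Lemma~\ref{lem:density_of_cosset}, applied to the square class $\lap$ associated with~$\gp$.

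First I would recall that $\lap$ has odd valuation at~$\gp$ and even valuation at every other point, so in particular $\ord_\gp\lap\odd$ and hence $\lap\notin \SingX$. This is exactly the hypothesis needed to feed $\lap$ into Lemma~\ref{lem:density_of_cosset} as the square class~$\la$. Taking the coset $\class{\SD}=0$, the set $A$ of that lemma becomes precisely the set of even points, and it decomposes as a disjoint union $A = A_+\dcup A_-$, where $A_-$ collects those even points at which $\lap$ fails to be a local square. The lemma then gives $\delta A_- = \sfrac{1}{2^{k+1}}>0$, so $A_-$ is infinite.

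Next I would note that $\gp$ itself belongs to $A_-$: it is even by assumption, and $\lap\notin \squares{K_\gp}$ because $\ord_\gp\lap\odd$. Since $A_-$ is infinite, I can therefore select some $\gq\in A_-$ with $\gq\neq \gp$. By construction $\gq$ is an even point satisfying $\lap\notin \squares{K_\gq}$, so the equivalence $(\ref{it:p_smile_q}\iff\ref{it:lap_is_square})$ of Proposition~\ref{prop:smile_criteria} yields $\gp\not\smile \gq$. Thus $\gp$ is not adjacent to~$\gq$, as required.

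There is no serious obstacle once Lemma~\ref{lem:density_of_cosset} is available; the only points deserving attention are the verification that $\lap\notin \SingX$ (so that the lemma genuinely applies to $\la=\lap$) and the observation that $\gp$ lies in $A_-$ rather than $A_+$, which forces one to pass to a strictly different point~$\gq$. The latter is harmless precisely because $A_-$ has positive density and is therefore infinite, so discarding the single point $\gp$ still leaves candidates for $\gq$.
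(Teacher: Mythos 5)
Your proof is correct, but it follows a genuinely different route from the paper's. The paper argues via approximation: starting from a unit $\mu$ with $\ord_\gp\mu=0$ and $\mu\notin\squares{K_\gp}$, it invokes \cite[Lemma~2.1]{LW92} to produce $\la\in\units{K}$ and a point $\gq$ with $\ord_\gp(\la-\mu)\geq 1$, $\ord_\gq\la=1$ and $\la\squares{K}\in\Sing{X\setminus\{\gq\}}$; then $\gq$ is even by Proposition~\ref{prop:even_criteria}, $\laq=\la\squares{K}$ agrees with $\mu$ in $\sqgd{K_\gp}$, hence is not a local square at~$\gp$, and so $\gq\not\smile\gp$. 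You instead feed $\lap$ into the Chebotarev-based density machinery: since $\ord_\gp\lap\odd$ forces $\lap\notin\SingX$, Lemma~\ref{lem:density_of_cosset} with $\SD=0$ applies, $A$ is exactly the vertex set of~$\GE$, and $A_-$ (the even points where $\lap$ is not a local square) has density $\sfrac{1}{2^{k+1}}>0$, hence is infinite; any $\gq\in A_-\setminus\{\gp\}$ is then a non-neighbor of~$\gp$ by the equivalence of conditions \ref{it:p_smile_q} and \ref{it:lap_is_square} in Proposition~\ref{prop:smile_criteria}. Each approach has its merits: yours is shorter given the lemmas already established and proves something quantitatively stronger --- for every vertex, the set of even points \emph{not} adjacent to it has positive density, so each vertex has infinitely many non-neighbors --- while the paper's construction is independent of the Chebotarev-based results (it needs only \cite{LW92} and Proposition~\ref{prop:even_criteria}), making that proposition self-contained and more elementary. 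Your two flagged verifications ($\lap\notin\SingX$, and the need to discard $\gp$ itself from $A_-$) are exactly the right points to check, and both are handled correctly.
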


\begin{proof}
Take an even point $\gp\in X$ and let $\mu\in \units{K}$ be an element such that $\ord_\gp\mu = 0$ and $\mu\notin\squares{K_\gp}$. Using \cite[Lemma~2.1]{LW92} we show that there is a point $\gq\in X$ and an element $\la\in \units{K}$ such that
\[
\ord_\gp(\la - \mu) \geq 1,
\qquad
\ord_\gq \la = 1
\qquad\text{and}\qquad
\la\squares{K}\in \Sing{X\setminus \{\gq\}}.
\]
Thus we can write $\la = \mu + \rho$ for some $\rho\in K_\gp$, $\ord_\gp\rho \geq 1$. It follows that in the residue field we have 
\[
(\la\mu)(\gp) = \mu(\gp)^2\in \squares{K(\gp)}.
\]
The well know correspondence between square class groups of a local field and its residue field ensures that $\la\equiv \mu\pmod{\squares{K_\gp}}$. In particular~$\la$ is not a local square at~$\gp$.

Now $\gq$ is the only point where $\la$ has an odd valuation, hence $\class{\gq}\in 2\PicX$ by Proposition~\ref{prop:even_criteria}. Moreover $\la\squares{K} = \laq$ and the classes of~$\la$ and~$\mu$ coincide in $\sqgd{K_\gp}$. Hence $\laq\notin \squares{K_\gp}$, which means that $\gq\not\smile\gp$.
\end{proof}

\begin{thm}\label{thm:diamG=2}
The graph~$\GE$ is connected and has a diameter~$2$.
\end{thm}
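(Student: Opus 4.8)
The plan is to separate the statement into two halves: that the diameter is at least $2$, and that any two distinct even points lie at distance at most $2$. The first half is immediate from Proposition~\ref{prop:GE_not_complete}: since no vertex of~$\GE$ is adjacent to all others, there exist two distinct even points that are not related, hence at distance greater than~$1$; and Theorem~\ref{thm:density_of_EP} guarantees that~$\GE$ has (infinitely many) vertices, so this is not vacuous. All the content therefore lies in producing, for arbitrary distinct even points $\gp,\gq$, a common neighbour~$\gr$; such a $\gr$ simultaneously establishes connectivity and the bound $\operatorname{diam}\GE\leq 2$.

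So I would fix distinct even points $\gp,\gq$ with associated square classes $\lap,\laq$, where $\gp$ (resp.\ $\gq$) is the unique point of odd valuation of $\lap$ (resp.\ $\laq$). The key reformulation is that an even point $\gr\notin\{\gp,\gq\}$ is a common neighbour of $\gp$ and $\gq$ precisely when $\Legendre{\lap}{\gr}=\Legendre{\laq}{\gr}=1$. Indeed, applying Proposition~\ref{prop:smile_criteria} to the even pair $\gp,\gr$ and combining it with the symmetry of Proposition~\ref{prop:symmetry} shows that $\gr\smile\gp$ is equivalent to $\lap\in\squares{K_\gr}$; since $\gr\neq\gp$ the valuation $\ord_\gr\lap$ is even, so this amounts to $\Legendre{\lap}{\gr}=1$, and symmetrically for~$\gq$. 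Thus I must manufacture an even point at which both $\lap$ and $\laq$ have Legendre symbol~$1$.

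I would produce $\gr$ by the density machinery of Section~3. Choose a basis $\SB=\{\beta_1,\dotsc,\beta_k\}$ of~$\SingX$ together with a compatible basis of~$\PPicX$; by Proposition~\ref{prop:Pic_class_by_Legendre} a point $\gr$ is even exactly when $\Legendre{\beta_i}{\gr}=1$ for all $i\leq k$. The central claim is that the $k+2$ square classes $\beta_1,\dotsc,\beta_k,\lap,\laq$ are linearly independent over~$\FF_2$. Since $\gp$ is even, $[\SingXp:\SingX]=2$, so $\{\beta_1,\dotsc,\beta_k,\lap\}$ is a basis of~$\SingXp$; moreover $\laq\notin\SingXp$, because every class in $\SingXp$ has even valuation at $\gq\neq\gp$ whereas $\ord_\gq\laq$ is odd. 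This yields the independence, and Theorem~\ref{thm:Hecke_Satz_169} then gives that the set of points $\gr$ with $\Legendre{\beta_i}{\gr}=1$ for all $i\leq k$ and $\Legendre{\lap}{\gr}=\Legendre{\laq}{\gr}=1$ has density $2^{-(k+2)}>0$, hence is infinite. Picking such a $\gr$ outside the finite set $\{\gp,\gq\}$ delivers the required even common neighbour.

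The main obstacle is the linear independence claim, and it is exactly there that all three hypotheses---$\gp$ even, $\gq$ even, and $\gp\neq\gq$---are consumed; the rest is a translation into Legendre symbols engineered so that the Chebotarev-based density theorem applies. A minor point worth recording is that the finitely many ramified points (in particular $\gp$ and $\gq$ themselves) are harmlessly avoided, since Theorem~\ref{thm:Hecke_Satz_169} concerns a positive-density set of points and discarding finitely many of them changes nothing.
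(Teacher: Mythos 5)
Your proof is correct and follows essentially the same route as the paper: Proposition~\ref{prop:GE_not_complete} gives the lower bound on the diameter, and then Proposition~\ref{prop:Pic_class_by_Legendre} together with Theorem~\ref{thm:Hecke_Satz_169}, applied to $\beta_1,\dotsc,\beta_k,\lap,\laq$, produces a common neighbour of any two distinct even points. In fact you make explicit a step the paper leaves implicit --- the linear independence of $\beta_1,\dotsc,\beta_k,\lap,\laq$ over~$\FF_2$, which is required to invoke Theorem~\ref{thm:Hecke_Satz_169} --- via $\SingXp=\SingX\dcup\lap\SingX$ and the observation that $\ord_\gq\laq\odd$ forces $\laq\notin\SingXp$.
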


\begin{proof}
The diameter of~$\GE$ is greater than~$1$ by Proposition~\ref{prop:GE_not_complete}. We show that it does not exceed~$2$. Let $\gp, \gq\in X$, $\gp\neq \gq$ be two even points. Suppose that $\gp\not\smile \gq$, i.e. they are not connected by an edge of~$\GE$. We will show that there is an even point $\gr\in X$ adjacent to both of them simultaneously. To this end fix again a set $\GB\subset X$ such that the classes of its elements form a basis of $\PPicX$. Let $\SB = \{\beta_1, \dotsc, \beta_k\}$ be a compatible basis of~$\SingX$. Proposition~\ref{prop:Pic_class_by_Legendre} says that a point~$\gr$ is even if and only if
\[
\Legendre{\beta_1}{\gr} = \dotsb = \Legendre{\beta_k}{\gr} = 1.
\]
Moreover $\gr$ is adjacent to~$\gp$ and~$\gq$ if an only if
\[
\Legendre{\lap}{\gr} = \Legendre{\laq}{\gr} = 1.
\]
Theorem~\ref{thm:Hecke_Satz_169} asserts that the set
\[
\Bigl\{ \gr\in X\st \Legendre{\beta_1}{\gr} = \dotsb = \Legendre{\beta_k}{\gr} = \Legendre{\lap}{\gr} = \Legendre{\laq}{\gr} = 1\Bigr\}
\]
is not empty (in fact is infinite). Hence there is $\gr\in V$ such that $\gr\smile \gp$ and $\gr\smile \gq$ simultaneously.
\end{proof}


\end{document}